\newcommand{\Z}{{\mathbb Z}}
\newcommand{\F}{{\mathbb F}}
\newtheorem{thm}{Theorem}[section]
\newtheorem{theorem}[thm]{Theorem}
\newtheorem{lemma}[thm]{Lemma}	
\newtheorem{proposition}[thm]{Proposition}
\theoremstyle{definition}
\newtheorem{definition}[thm]{Definition}
\theoremstyle{remark}
\newtheorem*{lemma*}{Lemma}
\numberwithin{equation}{section}
\title{Variations on a question concerning the degrees of divisors of $x^n-1$}
\author{Lola Thompson}
\address{Department of Mathematics\\ 
6188 Kemeny Hall\\
Dartmouth College\\
Hanover, NH 03755, USA}
\email[] {Lola.Thompson@Dartmouth.edu}
\begin{document}

\begin{abstract} 
\noindent In this paper, we examine a natural question concerning the divisors of the polynomial $x^n-1$: ``How often does $x^n-1$ have a divisor of every degree between $1$ and $n$?'' In a previous paper, we considered the situation when $x^n-1$ is factored in $\Z[x]$. In this paper, we replace $\Z[x]$ with $\F_p[x]$, where $p$ is an arbitrary-but-fixed prime. We also consider those $n$ where this condition holds for all $p$.\end{abstract}

\maketitle 

\section{Introduction and statement of results}

Which polynomials have divisors of every degree in a given polynomial ring? In a previous paper \cite{thompson}, we answered this question in $\Z[x]$ for the family of polynomials $f(x) = x^n-1$, where $n$ ranges over all positive integers. We defined an integer $n$ to be \textit{$\varphi$-practical} if the polynomial $x^n-1$ has a divisor in $\Z[x]$ of every degree up to $n$ and we showed that, if $F(X) = \#\{n \leq X: n \ \hbox{is} \ \varphi\hbox{-practical}\}$, then there exist two positive constants $c_1$ and $c_2$ such that \begin{equation}\label{lt} c_1 \frac{X}{\log X} \leq F(X) \leq c_2 \frac{X}{\log X}.\end{equation} 

In this paper, we will examine the factorization of $x^n-1$ over other rings. For each rational prime $p$, we will define an integer $n$ to be \textit{$p$-practical} if $x^n-1$ has a divisor in $\F_p[x]$ of every degree less than or equal to $n$. In order to better understand the relationship between $\varphi$-practical and $p$-practical numbers, we will define an intermediate set of numbers which we shall call the $\lambda$-practical numbers. An integer $n$ is \textit{$\lambda$-practical} if and only if it is $p$-practical for every rational prime $p$. Clearly each $\varphi$-practical number is $\lambda$-practical. In Sections 2 and 3, we will give alternative characterizations of the $p$-practical and $\lambda$-practical numbers that are often easier to work with.

The main goal of this paper is to examine the relative sizes of the sets of $\varphi$-practical, $\lambda$-practical and $p$-practical numbers. Accordingly, the remaining sections take the following form. In Section 4, we will develop some theory on the structure of $\lambda$-practical numbers and show that there are infinitely many $\lambda$-practical numbers that are not $\varphi$-practical. We will go one step further in Section 5 and prove:

\begin{theorem}\label{philambdatheorem} For $X$ sufficiently large, the order of magnitude of $\lambda$-practicals in $[1, X]$ that are not $\varphi$-practical is $\frac{X}{\log X}$.\end{theorem}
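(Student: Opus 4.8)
The plan is to establish the upper and lower bounds separately. For the upper bound, note that a $\lambda$-practical number is by definition $p$-practical for every prime $p$, and in particular $2$-practical, so it suffices to prove $\#\{n\le X: n\text{ is }2\text{-practical}\}\ll X/\log X$. By the characterization of $p$-practical numbers from Section 2 --- which, like the classical description of practical numbers, presents $p$-practicality as a chain condition on the multiset of degrees of the irreducible factors of $x^n-1$ in $\F_p[x]$ --- the $2$-practical numbers form a multiplicatively structured set in the sense of Saias and Weingartner, and the sieve argument giving the right-hand side of \eqref{lt} applies to it verbatim. Since the $\lambda$-practical integers in $[1,X]$ that are not $\varphi$-practical form a subset of the $2$-practical integers, the upper bound follows at once.

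For the lower bound, I would start from the structure theory of Section 4, which provides $\lambda$-practical numbers that are not $\varphi$-practical: fix one such $n_0$, which we are free to take as large as we please, together with an integer $v$ (a ``defect'') with $1\le v<n_0$ that is not expressible as a subset sum of $\{\varphi(d):d\mid n_0\}$ and small enough relative to $n_0$ that $(v+1,\,n_0]$ contains primes. Now form numbers $N=n_0 q_1\cdots q_r$, where $q_1<q_2<\dots<q_r$ are primes with $q_1>v+1$, none dividing $n_0$, and each $q_{j+1}$ small enough relative to $n_0 q_1\cdots q_j$ for the $\lambda$-practical multiplication lemma of Section 4 to apply (roughly, $q_{j+1}$ at most that product). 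Two facts then hold. First, applying the multiplication lemma inductively, every such $N$ is $\lambda$-practical. Second, no such $N$ is $\varphi$-practical: writing $s=q_1\cdots q_r$, which is squarefree, coprime to $n_0$, and built from primes exceeding $v+1$, any divisor of $N=n_0 s$ is of the form $de$ with $d\mid n_0$ and $e\mid s$; if $e>1$ then $\varphi(de)=\varphi(d)\varphi(e)\ge\prod_{\ell\mid e}(\ell-1)>v$, so such a divisor cannot occur in a subset sum equal to $v$. Hence a subset-sum representation of $v$ by $\{\varphi(d'):d'\mid N\}$ would use only divisors of $n_0$ and therefore cannot exist, so $v$ remains a defect of $N$.

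Let $\mathcal{N}$ be the set of all $N$ obtained in this way. Every element of $\mathcal{N}$ is $\lambda$-practical and not $\varphi$-practical, so it remains to prove $\#(\mathcal{N}\cap[1,X])\gg X/\log X$. But, after the fixed factor $n_0$ is pulled out, $\mathcal{N}$ is governed entirely by a chain condition on the increasing sequence of prime factors (the primes must exceed $v+1$, avoid the finitely many prime divisors of $n_0$, and each be at most roughly the product of the preceding ones), which is exactly the shape of condition defining the practical numbers. Therefore the lower-bound machinery behind the left-hand side of \eqref{lt} --- Saias's method, or equivalently Weingartner's theorem on multiplicative sets --- applies and yields $\#(\mathcal{N}\cap[1,X])\gg X/\log X$. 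Together with the upper bound, this proves the theorem.

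I expect the main obstacle to be the $\lambda$-practical multiplication lemma itself: showing that multiplying a $\lambda$-practical number by an appropriate prime $q$ preserves $\lambda$-practicality requires understanding the factorization of $x^{nq}-1$ over $\F_p$ \emph{uniformly in $p$} --- in particular the behaviour of the orders $\mathrm{ord}_{qd}(p)=\mathrm{lcm}(\mathrm{ord}_q(p),\mathrm{ord}_d(p))$ as $d$ ranges over the divisors of $n$ --- which is exactly what the structure theory of Section 4 must supply, and where I expect the $\lambda$-analogue to diverge from the $\varphi$-practical case. A lesser but still essential point is that extracting the full $X/\log X$ (rather than only $X/(\log X)^2$, which a crude Bertrand-postulate iteration would give) in the count of $\mathcal{N}$ forces one to invoke the Saias--Weingartner counting theorem for chain-defined multiplicative sets rather than to argue directly.
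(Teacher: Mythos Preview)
Your upper-bound argument has a genuine gap. You bound the $\lambda$-practicals by the $2$-practicals and then assert that $\#\{n\le X: n\text{ is }2\text{-practical}\}\ll X/\log X$ because ``the sieve argument giving the right-hand side of \eqref{lt} applies to it verbatim.'' It does not. The characterization in Lemma~\ref{pcondsum} is \emph{not} a chain condition on the primes of $n$: a prime $q$ with $\ell_p(q)$ small (for instance any $q$ with $p\equiv 1\pmod q$) can be arbitrarily large and still be adjoined to a $p$-practical number while preserving $p$-practicality. Thus the $p$-practical integers are not a Saias--Weingartner set in the relevant sense, and indeed the paper states explicitly (just after Theorem~\ref{plambdatheorem}) that the best available upper bound for $p$-practicals is only $X\sqrt{\log\log X/\log X}$, and that conditionally. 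The correct route, which the paper takes, avoids $p$-practicals altogether: by Lemma~\ref{lnecessary} every $\lambda$-practical number is weakly $\varphi$-practical, and by Proposition~\ref{even} every even weakly $\varphi$-practical number is practical; the map $n\mapsto 2^{l}n\in(X,2X]$ on odd $n$ then reduces the count to Saias's $O(X/\log X)$ bound for practical numbers.

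Your lower-bound strategy is essentially the paper's: fix a seed $n_0$ (the paper uses $45$, with defect $v=22$) that is $\lambda$-practical but not $\varphi$-practical, then multiply by primes exceeding $v+1$ subject to the chain condition of Lemma~\ref{weak lambda}, noting that the defect persists since any new divisor has $\varphi$-value exceeding $v$. The paper carries this out concretely by starting from the $2$-dense integers with no late obstructions, applying a pigeonhole step to fix the small-prime part $m_0$, and then rewriting each such $n$ as an explicit $n'$ of the form $45M$ with $P^{-}(M)=29$; you instead invoke Saias--Weingartner counting as a black box. Either works, but note that the obstacle you flag at the end --- the $\lambda$-practical multiplication lemma --- is exactly Lemma~\ref{weak lambda} and is already in hand; the real difficulty was the upper bound.
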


In Section 6, we examine the relationship between $p$-practicals and $\lambda$-practicals, culminating in a proof of the following theorem:

\begin{theorem}\label{plambdatheorem} For every rational prime $p$ and for $X$ sufficiently large, the order of magnitude of $p$-practicals in $[1, X]$ that are not $\lambda$-practical is at least $\frac{X}{\log X}$.\end{theorem}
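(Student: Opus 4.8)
The plan is to build, for each fixed $p$, an explicit family of $\gg X/\log X$ integers $n\le X$ that are $p$-practical but not $\lambda$-practical. The mechanism is an asymmetry between $\F_p$ and a well-chosen auxiliary prime $q$: over $\F_p$ the factor $\Phi_d$ of $x^n-1$ breaks into $\varphi(d)/\mathrm{ord}_d(p)$ irreducibles of degree $\mathrm{ord}_d(p)$, so if the orders $\mathrm{ord}_d(p)$ stay small the multiset of irreducible factor degrees easily has all subset sums up to $n$; but over $\F_q$ we will force one $\Phi_d$ to be irreducible of large degree, creating a gap in the attainable degrees. The structural input I would take from the characterizations in Section~2 is an ``appending a prime'' lemma: if $k$ is $p$-practical, $\ell$ is a prime with $\ell\nmid k$, and $\mathrm{ord}_\ell(p)\le k+1$, then $k\ell$ is $p$-practical --- because over $\F_p$ the factors of $x^{k\ell}-1$ arising from the divisors $\ell e$ ($e\mid k$) all have degree at least $\mathrm{ord}_\ell(p)$, and the smallest of them, of degree $\mathrm{ord}_\ell(p)\le k+1$, fits directly above the interval $[0,k]$ already covered by the factors of $x^k-1$, after which the remaining factors continue the coverage. (The case $p\mid n$ is handled via $x^{p^am}-1=(x^m-1)^{p^a}$, since $p^a$ copies of a practical multiset form a practical multiset.)

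The construction: take $n=k\ell$ where $k$ is a $p$-practical number with $p\nmid k$, and $\ell$ is a prime with $k+2<\ell$ and $\mathrm{ord}_\ell(p)\le k+1$. The second condition holds for a positive proportion of primes $\ell$ in any interval $(k,Ck)$: such $\ell$ have $\mathrm{ord}_\ell(p)<\ell-1$ exactly when $p$ is not a primitive root mod $\ell$, and then $\mathrm{ord}_\ell(p)\le (\ell-1)/2\le k+1$; that $p$ fails to be a primitive root modulo a positive proportion of primes is elementary (e.g.\ whenever $p$ is a quadratic residue mod $\ell$, which is a congruence condition on $\ell$ by reciprocity). For such $n$: by the appending lemma $n$ is $p$-practical; and if $q$ is any primitive root modulo $\ell$ (infinitely many such $q$ are prime by Dirichlet, so choose one coprime to $n$), then over $\F_q$ the polynomial $\Phi_\ell$ is irreducible of degree $\ell-1$, every irreducible factor of $x^n-1$ divisible into a cyclotomic $\Phi_{\ell e}$ has degree $\ge\ell-1$, and the remaining irreducible factors --- those of $x^k-1$ --- have total degree $k$. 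Since $\ell-1>k+1$, any subset of the factor degrees sums to at most $k$ or to at least $\ell-1\ge k+2$: there is no divisor of $x^n-1$ of degree $k+1$ in $\F_q[x]$, so $n$ is not $q$-practical, hence not $\lambda$-practical.

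For the count one ranges $k$ over $p$-practical numbers up to roughly $\sqrt X$ and, for each, $\ell$ over the admissible primes in $(k+2,2k)$; the products $k\ell$ are distinct and at most $X$, and one uses that the counting function of the $p$-practical numbers is $\asymp Y/\log Y$ uniformly over dyadic ranges (it dominates the count of $\varphi$-practical numbers, for which this is known from \eqref{lt}-type arguments, and is bounded by the count of all integers). The main obstacle is precisely this last bookkeeping: the crude estimate --- density $1/\log$ of $k$ times density $1/\log$ of $\ell$ --- yields only order $X/(\log X)^2$, so to reach $X/\log X$ one must avoid paying a logarithm twice. The route I would pursue is to enlarge the set of admissible cofactors: when $\mathrm{ord}_\ell(p)$ is appreciably smaller than $k$, the many small factors contributed by $\ell$ (from the divisors $\ell e$, $e\mid k$) themselves repair failures of practicality of $x^k-1$ over $\F_p$, so $k\ell$ can be $p$-practical for a positive proportion of $k$ in the relevant window; equivalently, one shows that a positive proportion of the $p$-practical numbers $n\le X$ have largest prime factor of size $\asymp\sqrt n$ with $p$ not a primitive root modulo it. Finally $p=2$ and $p=3$ need separate treatment, since then the new factors over $\F_q$ can have degree $\le p-1\le 2$ and might fill the gap at $k+1$; there one instead forces the gap at a larger value, using a high power of $p$ (or of a fixed small prime of small order mod $p$) dividing $n$ to enlarge the interval covered over $\F_p$ while retaining a large irreducible factor over a suitable $\F_q$.
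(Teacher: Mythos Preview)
Your construction of individual $p$-practical, non-$\lambda$-practical numbers is correct and pleasant: $n=k\ell$ with $k$ $p$-practical, $\ell$ prime, $\ell>k+2$, and $\mathrm{ord}_\ell(p)\le k+1$ works, and your appending lemma is exactly Lemma~\ref{pnecessary}. (Incidentally, once $\ell>k+2$ you don't need the auxiliary prime $q$: by Lemma~\ref{lnecessary} such an $n$ already fails to be weakly $\varphi$-practical, hence fails to be $\lambda$-practical.)

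The genuine gap is the counting, and you have correctly diagnosed it but not closed it. With $k$ ranging over $p$-practicals and $\ell$ over primes in $(k+2,2k)$ that are quadratic residues for $p$, both factors cost a logarithm and you get only $X/(\log X)^2$. Your proposed repair---letting $k$ range over a positive proportion of \emph{all} integers in a window, or equivalently showing that a positive proportion of $p$-practicals $n\le X$ have $P(n)\asymp\sqrt n$ with $p$ a nonresidue---is not proved, and neither statement follows from anything established here. Note also that widening the range of $\ell$ does not help: the primes $\ell$ with $\mathrm{ord}_\ell(p)\le k+1$ all divide $\prod_{j\le k+1}(p^j-1)$, so there are only $O(k^2)$ of them in total, forcing $\ell\asymp k$ and hence the double logarithm.

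The paper's route avoids this entirely. Rather than building $n$ from a $p$-practical $k$ and a special prime $\ell$, it starts from the $2$-dense integers, which by Saias already number $\gg X/\log X$, and applies to each a fixed rational rescaling of its small prime part so that the result has the shape (seed)$\cdot M$, where the seed (e.g.\ $21$ for $p=2$, $26$ for $p=3$, $2p^2$ times a small factor for $p\ge5$) is $p$-practical but not $\lambda$-practical, and $M$ has all prime factors large and satisfying the weakly $\varphi$-practical inequalities. The rescaling is injective on a set of size $\gg X/\log X$, so no second logarithm is lost. The mechanism for non-$\lambda$-practicality is thus a \emph{small} obstruction (a missing degree below the seed) propagated upward, rather than your \emph{large} obstruction at $k+1$; this is what lets the paper piggyback on Saias's count instead of sieving twice.
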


We remark that the order of magnitude described in Theorem \ref{plambdatheorem} is deemed to be \textit{at least} $\frac{X}{\log X}$ (as opposed to \textit{precisely} $\frac{X}{\log X}$) because of the fact that we still do not know the true order of magnitude of the $p$-practical numbers. In another paper \cite{thompson2}, we show (assuming the validity of the Generalized Riemann Hypothesis) that $$\frac{X}{\log X} \ll \#\{n \leq X: n \ \hbox{is} \ p\hbox{-practical}\} \ll X \sqrt{\frac{\log \log X}{\log X}}.$$ As a result, Theorem \ref{plambdatheorem} tells us that the order of magnitude of $p$-practicals that are not $\lambda$-practical is between $\frac{X}{\log X}$ and $X \sqrt{\frac{\log \log X}{\log X}}$ (with the upper bound holding provided that the Generalized Riemann Hypothesis is valid). 

Throughout this paper, we will make use of the following notation. Let $n$ be a positive integer. Let $p$ and $q$, as well as any subscripted variations, be primes. We will use $P(n)$ to denote the largest prime factor of $n$, with $P(1) = 1.$ Moreover, we will use $P^-(n)$ to denote the smallest prime factor of $n$, with $P^-(1) = + \infty$. Let $\tau(n)$ denote the number of positive divisors of $n$, and let $\Omega(n)$ denote the number of prime factors of $n$ which are not necessarily distinct. 

%%Background and preliminary results
\section{Background and preliminary results}

In \cite{thompson}, we gave the following alternative characterization for the $\varphi$-practical numbers, which we state here as a lemma.

\begin{lemma}\label{phicond} An integer $n$ is \textit{$\varphi$-practical} if and only if every $m$ with $1 \leq m \leq n$ can be written in the form $$m = \sum_{d \in \mathcal{D}} \varphi(d),$$ where $\mathcal{D}$ is a subset of divisors of $n$. \end{lemma}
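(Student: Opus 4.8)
The plan is to unwind the definition of $\varphi$-practical in terms of the factorization of $x^n - 1$ over $\Z[x]$. The key structural fact is that $x^n - 1 = \prod_{d \mid n} \Phi_d(x)$, where $\Phi_d$ is the $d$-th cyclotomic polynomial, and crucially the $\Phi_d$ are irreducible over $\Z[x]$ with $\deg \Phi_d = \varphi(d)$. So every monic divisor of $x^n - 1$ in $\Z[x]$ is of the form $\prod_{d \in \mathcal{D}} \Phi_d(x)$ for some subset $\mathcal{D} \subseteq \{d : d \mid n\}$, and its degree is exactly $\sum_{d \in \mathcal{D}} \varphi(d)$.

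First I would record the cyclotomic factorization and the irreducibility of $\Phi_d$ over $\Q$ (hence over $\Z$, by Gauss's lemma, since the $\Phi_d$ are monic with integer coefficients), citing this as standard. Second, I would observe that by unique factorization in $\Q[x]$, the monic divisors of $x^n-1$ in $\Q[x]$ are precisely the products $\prod_{d\in\mathcal D}\Phi_d$ over subsets $\mathcal D$ of the divisors of $n$; and a monic rational divisor of the monic integer polynomial $x^n-1$ automatically lies in $\Z[x]$ (again Gauss's lemma), while an arbitrary integer divisor is a unit multiple, i.e. $\pm 1$ times, a monic one, so it has the same degree. Third, I would translate: $x^n-1$ has a divisor in $\Z[x]$ of degree $m$ if and only if there is a subset $\mathcal D$ of the divisors of $n$ with $\sum_{d\in\mathcal D}\varphi(d)=m$. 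Taking $\mathcal D=\varnothing$ gives $m=0$ and $\mathcal D=\{d:d\mid n\}$ gives $m=\sum_{d\mid n}\varphi(d)=n$, so the range of achievable $m$ is a subset of $\{0,1,\dots,n\}$; the definition of $\varphi$-practical asks exactly that every $m$ in $1\le m\le n$ be achievable, which is the displayed condition in the lemma.

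There is no real obstacle here — the proof is essentially a dictionary between the two phrasings — so the main thing to be careful about is the bookkeeping around non-monic divisors and the direction of Gauss's lemma (a monic factor in $\Q[x]$ of a monic polynomial in $\Z[x]$ has integer coefficients), and noting that allowing $\mathcal D$ to be an arbitrary subset of divisors (rather than insisting the product be a proper divisor) is harmless since $m=0$ and $m=n$ are the trivial divisors $1$ and $x^n-1$. I would present this as a short paragraph-style proof rather than a long argument, since the content is entirely in the cyclotomic factorization, which was already used in \cite{thompson}.
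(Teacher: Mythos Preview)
Your proposal is correct and follows exactly the same approach as the paper: the paper also justifies the lemma by writing $x^n-1=\prod_{d\mid n}\Phi_d(x)$ with the $\Phi_d$ irreducible in $\Z[x]$ of degree $\varphi(d)$, so that divisors of $x^n-1$ correspond to subsets of divisors of $n$. Your version is more careful about Gauss's lemma and non-monic divisors, but the argument is the same.
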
 It is not difficult to see Lemma \ref{phicond}: since $$x^n-1 = \prod_{d \mid n} \Phi_d(x),$$ where $\Phi_d(x)$ is the $d^{th}$ cyclotomic polynomial (which is irreducible in $\Z[x]$ with degree $\varphi(d)$), we see that divisors of $x^n-1$ correspond to subsets of divisors of $n$. 

The term ``$\varphi$-practical'' was chosen in recognition of the connection between this alternative characterization of $\varphi$-practical numbers and the definition of a practical number. A. K. Srinivasan coined the term ``practical number'' in 1948, defining an integer $n$ to be \textit{practical} if every $m$ with $1 \leq m \leq \sigma(n)$ can be written as a sum of distinct positive divisors of $n$; that is, $m = \sum_{d \in \mathcal{D}} d$, where $\mathcal{D}$ is a subset of divisors of $n$. Six years later, B. M. Stewart \cite{stewart} gave a simple necessary-and-sufficient condition for integers to be practical:

\begin{lemma}[Stewart]\label{stewartkey}If $M$ is a practical number and $p$ is a prime with $(p, M) = 1$, then $M' = p^k M$ is practical (for $k \geq 1$) if and only if $p \leq \sigma(M) + 1$. Moreover, if $M$ is practical, so is $M/P(M)$.\end{lemma}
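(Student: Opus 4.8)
The plan is to translate practicality into a statement about subset sums and then argue by a base-$p$ (radix) decomposition. Write $\Sigma(n)$ for the set of integers representable as a sum of distinct divisors of $n$. Since the empty sum is $0$ and any subset sum is at most $\sigma(n)$, we have $\Sigma(n)\subseteq\{0,1,\dots,\sigma(n)\}$, and $n$ is practical exactly when equality holds. The one structural fact I will use repeatedly is that, when $(p,M)=1$, every divisor of $p^kM$ is uniquely $p^j d$ with $0\le j\le k$ and $d\mid M$, so choosing a subset of the divisors of $p^kM$ amounts to choosing, independently for each $j$, a subset $A_j$ of the divisors of $M$; hence
$$\Sigma(p^kM)=\Bigl\{\,\textstyle\sum_{j=0}^{k}p^jt_j:\ t_j\in\Sigma(M)\,\Bigr\},\qquad \sigma(p^kM)=\sigma(p^k)\,\sigma(M).$$

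For the ``only if'' direction, suppose $p^kM$ is practical but $p\ge\sigma(M)+2$. The integer $m=\sigma(M)+1$ satisfies $m\le\sigma(p^kM)$, so $m\in\Sigma(p^kM)$; but every divisor of $p^kM$ divisible by $p$ is $\ge p>m$, so any such representation of $m$ uses only divisors of $M$, forcing $m\le\sigma(M)$, a contradiction. (Note this argument never uses that $M$ is practical — a point I exploit below.) For the ``if'' direction, assume $M$ is practical and put $S=\sigma(M)$, so $S\ge p-1$. By the displayed description of $\Sigma(p^kM)$ it suffices to show that every integer $m$ with $0\le m\le\sigma(p^k)S$ can be written as $\sum_{j=0}^{k}p^jt_j$ with each $t_j\in[0,S]$, since each such $t_j$, being $\le\sigma(M)$, then lies in $\Sigma(M)$. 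I prove this by induction on $k$: given $m$, set $t_k=\min\bigl(S,\lfloor m/p^k\rfloor\bigr)$ and verify that $m':=m-p^kt_k$ lies in $[0,\sigma(p^{k-1})S]$ in both cases — the case $t_k=\lfloor m/p^k\rfloor$ uses the telescoping identity $(p-1)\sigma(p^{k-1})=p^k-1\le S\,\sigma(p^{k-1})$, and the case $t_k=S$ uses $\sigma(p^k)=p^k+\sigma(p^{k-1})$ — and then apply the inductive hypothesis to $m'$; the base case $k=0$ is immediate.

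For the ``moreover'' statement I may assume $M>1$ and write $M=p^aN$ with $p=P(M)$, $a\ge1$, $(p,N)=1$; the goal is that $M/P(M)=p^{a-1}N$ is practical. By the ``if'' direction it is enough to prove that $N$ is practical and $p\le\sigma(N)+1$; the inequality follows from the ``only if'' argument applied to $N$ and $p^a$ (which did not require $N$ practical). To show $N$ is practical, suppose not, factor $N=q_1^{b_1}\cdots q_r^{b_r}$ with $q_1<\cdots<q_r<p$, and let $i$ be the largest index for which $N_i:=q_1^{b_1}\cdots q_i^{b_i}$ is practical; then $0\le i<r$, and since $N_iq_{i+1}^{b_{i+1}}$ is not practical, the already-proved ``if'' direction forces $q_{i+1}\ge\sigma(N_i)+2$. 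Now set $m=\sigma(N_i)+1$; then $m\le\sigma(N)\le\sigma(M)$, so $m\in\Sigma(M)$. Any divisor of $M$ divisible by $p$ or by some $q_j$ with $j\ge i+1$ is $\ge q_{i+1}>m$ (here we use $p=P(M)\ge q_{i+1}$), so every divisor of $M$ that is $\le m$ divides $N_i$; hence $m$ is a sum of distinct divisors of $N_i$ and $m\le\sigma(N_i)$, contradicting $m=\sigma(N_i)+1$.

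The main obstacle is this last point. Closure under removing one factor of $p$ genuinely requires $p$ to be the \emph{largest} prime of $M$: it already fails for $M=6=2\cdot3$ and the prime $p=2$, since $6$ is practical but $3$ is not. The device that makes the largest-prime hypothesis bite is to not attempt to show ``$N$ is practical'' head-on, but instead to locate the first obstruction $N_i\hookrightarrow N_{i+1}$ inside $N$; the resulting lower bound $q_{i+1}\ge\sigma(N_i)+2$, combined with $p\ge q_{i+1}$, rules out using any divisor of $M$ touching the primes $q_{i+1},\dots,q_r,p$ in a representation of $\sigma(N_i)+1$, which is what produces the contradiction. Everything else (the subset-sum reformulation and the radix induction) is routine.
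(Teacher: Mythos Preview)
The paper does not give its own proof of this lemma: it is quoted as Stewart's result with a citation to \cite{stewart} and used as a black box thereafter. So there is nothing in the paper to compare your argument against line by line.

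That said, your proof is correct. The subset-sum reformulation $\Sigma(p^kM)=\{\sum_j p^jt_j:t_j\in\Sigma(M)\}$ and the greedy/radix induction for the ``if'' direction are the standard route, and your observation that the ``only if'' argument does not actually require the base $M$ to be practical is exactly what is needed to make the ``moreover'' clause go through. The one place worth a second look is the step ``every divisor of $M$ that is $\le m$ divides $N_i$'': you are implicitly using that a divisor of $M$ with all prime factors among $q_1,\dots,q_i$ must divide $q_1^{b_1}\cdots q_i^{b_i}$, which is immediate from unique factorization but should perhaps be said explicitly. Your closing remark about why $p=P(M)$ is essential (with the $M=6$, $p=2$ counterexample) is a nice diagnostic and shows you have identified the real content of the ``moreover'' statement.
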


Stewart's condition gave rise to a number of results concerning the practical numbers, most notably a series of improvements on upper and lower bounds for the size of the set of practical numbers up to $X$. The tightest bounds were given by E. Saias in \cite{saias}, who showed that there exist two constants $C_1$ and $C_2$ such that \begin{equation}\label{saias} C_1 \frac{X}{\log X} \leq PR(X) \leq C_2 \frac{X}{\log X},\end{equation} where $PR(X) = \#\{n \leq X : n \ \mathrm{is} \ \mathrm{practical}\}$. 

Recall the upper and lower bounds for $F(X)$ given in \eqref{lt}, which mirror Saias' bounds for the practical numbers. One of our aims in this paper will be to obtain a similar upper bound for the $\lambda$-practical numbers. As in the case of the $\varphi$-practical numbers, we will find it helpful to have alternative characterizations of the $\lambda$-practical and $p$-practical numbers in terms of their divisors. Let $\ell_a(n)$ denote the multiplicative order of $a$ (mod $n$) for integers $a$ with $(a, n) = 1$. If $(a, n) > 1$, let $n_{(a)}$ denote the largest divisor of $n$ that is coprime to $a$, and let $\ell_a^*(n) = \ell_a(n_{(a)}).$ In particular, if $(a, n) = 1$ then $\ell_a^*(n) = \ell_a(n).$ \begin{lemma}\label{pcondsum} An integer $n$ is \textit{$p$-practical} if and only if every $m$ with $1 \leq m \leq n$ can be written as $m = \sum_{d \mid n} \ell_p^*(d) n_d$, where $n_d$ is an integer with $0 \leq n_d \leq \frac{\varphi(d)}{\ell_p^*(d)}.$\end{lemma}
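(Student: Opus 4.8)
The plan is to convert the defining condition of a $p$-practical number into a subset-sum statement about the degrees of the monic irreducible factors of $x^n-1$ in $\F_p[x]$. By unique factorization, a monic divisor of $x^n-1$ in $\F_p[x]$ is precisely a product of a sub-multiset of the monic irreducible factors of $x^n-1$ (taken with multiplicity), and its degree is the sum of the degrees of the factors chosen. Hence $n$ is $p$-practical if and only if every integer $m$ with $1\le m\le n$ arises as such a subset-sum, and the task reduces to describing the multiset of degrees of the irreducible factors of $x^n-1$ over $\F_p$.

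To obtain that multiset I would start from $x^n-1=\prod_{d\mid n}\Phi_d(x)$, valid over $\F_p$ as well as over $\Z$, and factor each $\Phi_d(x)$ modulo $p$. When $p\nmid d$, the polynomial $\Phi_d(x)$ is separable mod $p$ and splits into $\varphi(d)/\ell_p(d)$ distinct monic irreducibles, each of degree $\ell_p(d)$; this is the standard description of cyclotomic factors over a finite field (see, e.g., Lidl and Niederreiter). When $p\mid d$, write $d=p^b e$ with $b\ge 1$ and $p\nmid e$; from $\Phi_{pc}(x)=\Phi_c(x^p)/\Phi_c(x)$ for $p\nmid c$ together with $\Phi_{p^b c}(x)=\Phi_{pc}(x^{p^{b-1}})$, and reducing mod $p$ via $g(x^{p^j})\equiv g(x)^{p^j}$, one gets $\Phi_d(x)\equiv\Phi_e(x)^{\varphi(p^b)}\pmod p$. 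Thus all irreducible factors of $\Phi_d(x)$ mod $p$ have degree $\ell_p(e)=\ell_p^*(d)$, and there are $\varphi(p^b)\cdot\varphi(e)/\ell_p(e)=\varphi(d)/\ell_p^*(d)$ of them, counted with multiplicity. Since $\ell_p^*(d)=\ell_p(d)$ and $\varphi(d)/\ell_p^*(d)=\varphi(d)/\ell_p(d)$ also when $p\nmid d$, both cases unify: for each $d\mid n$ the factorization of $\Phi_d(x)$ over $\F_p$ contributes the degree value $\ell_p^*(d)$ with multiplicity $\varphi(d)/\ell_p^*(d)$.

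Finally I would assemble the conclusion. Choosing a sub-multiset of the irreducible factors of $x^n-1$ amounts to choosing, for each $d\mid n$, an integer $n_d$ with $0\le n_d\le\varphi(d)/\ell_p^*(d)$ copies of the factors of degree $\ell_p^*(d)$ coming from $\Phi_d$, and the resulting divisor has degree $\sum_{d\mid n}\ell_p^*(d)\,n_d$; conversely every such choice of the $n_d$ yields a divisor of that degree. Therefore a divisor of degree $m$ exists if and only if $m=\sum_{d\mid n}\ell_p^*(d)\,n_d$ for some admissible $n_d$, and $n$ is $p$-practical exactly when this holds for all $m$ with $1\le m\le n$, which is the assertion of the lemma. (Consistency check: taking each $n_d$ maximal gives $\sum_{d\mid n}\varphi(d)=n=\deg(x^n-1)$.) The only genuinely delicate point is the inseparable case $p\mid d$, where $\Phi_d(x)$ fails to be squarefree mod $p$; one must track multiplicities carefully and justify the reduction $\Phi_{p^b e}\equiv\Phi_e^{\varphi(p^b)}$ cleanly, after which the remainder is bookkeeping with $\varphi$ and $\ell_p^*$.
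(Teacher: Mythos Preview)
Your argument is correct and follows essentially the same approach as the paper: use $x^n-1=\prod_{d\mid n}\Phi_d(x)$ together with the factorization of each $\Phi_d$ over $\F_p$ (the paper records this as Proposition~\ref{factorppractical}) to see that $\Phi_d$ contributes exactly $\varphi(d)/\ell_p^*(d)$ irreducible factors of degree $\ell_p^*(d)$, whence divisors of $x^n-1$ correspond to choices of $n_d$ as stated. If anything, you supply more detail than the paper does, since you derive the $p\mid d$ case explicitly rather than simply citing it.
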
 To see the relationship between the two characterizations of $p$-practical numbers, recall the following well-known proposition (cf. \cite[pg. 489, ex.20]{df}):

\begin{proposition}\label{factorppractical} The following two cases completely characterize the factorization of $\Phi_d(x)$ over $\F_p$:

1) If $(d, p) =1$, then $\Phi_d(x)$ decomposes into a product of distinct irreducible polynomials of degree $\ell_p(d)$ in $\F_p[x]$.

2) If $d = mp^k$, $(m, p) = 1$, then $\Phi_d(x) = \Phi_m(x)^{\varphi(p^k)}$ over $\F_p$.

\end{proposition}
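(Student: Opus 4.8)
The plan is to treat the two cases separately, working in the algebraic closure $\overline{\F}_p$ for case (1) and by a short induction for case (2). Throughout I use the defining factorization $x^n-1 = \prod_{d\mid n}\Phi_d(x)$ together with $\deg\Phi_d = \varphi(d)$; both survive reduction of the coefficients modulo $p$ because each $\Phi_d$ is monic. I also use freely that in $\F_p[x]$ one has $g(x)^{p^j} = g(x^{p^j})$ for every polynomial $g$, and that $\F_p[x]$ is an integral domain, so equal products of nonzero polynomials may be cancelled.

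For case (1), suppose $p\nmid d$. Then $x^d-1$ is coprime to its derivative $dx^{d-1}$ in $\F_p[x]$, hence separable, so $\Phi_d(x)$ is squarefree over $\F_p$ and is therefore a product of \emph{distinct} monic irreducibles; moreover the group $\mu_d\subseteq\overline{\F}_p^{\times}$ of $d$-th roots of unity is cyclic of order $d$. Exactly as in characteristic zero --- an induction on $e\mid d$ using $x^e-1 = \prod_{f\mid e}\Phi_f(x)$, the fact that $\mu_d$ has exactly $\varphi(e)$ elements of exact order $e$, and $\deg\Phi_e = \varphi(e)$ --- one sees that the roots of $\Phi_e(x)$ in $\overline{\F}_p$ are precisely the elements of $\mu_d$ of exact order $e$. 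Now fix a root $\zeta$ of $\Phi_d(x)$, so $\zeta$ has order $d$. For every $f\geq 1$ we have $\zeta\in\F_{p^f}\iff \zeta^{p^f-1}=1\iff d\mid p^f-1\iff p^f\equiv 1\pmod d$, and the least such $f$ is by definition $\ell_p(d)$; since also $\zeta\in\F_{p^f}\iff[\F_p(\zeta):\F_p]\mid f$, we conclude $[\F_p(\zeta):\F_p] = \ell_p(d)$, i.e. the minimal polynomial of $\zeta$ over $\F_p$ has degree $\ell_p(d)$. As every root of $\Phi_d(x)$ has order $d$, every irreducible factor of $\Phi_d(x)$ over $\F_p$ has degree $\ell_p(d)$ (and there are accordingly $\varphi(d)/\ell_p(d)$ of them).

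For case (2), write $d = p^k m$ with $k\geq 1$ and $p\nmid m$, and fix $k$; I induct on $m$ among positive integers coprime to $p$. Over $\F_p$ we have $x^{p^k m}-1 = (x^{p^{k-1}m}-1)^p$, so $\prod_{e\mid p^k m}\Phi_e(x) = \big(\prod_{e\mid p^{k-1}m}\Phi_e(x)\big)^p$; since the divisors of $p^k m$ that do not divide $p^{k-1}m$ are exactly the integers $p^k e$ with $e\mid m$, cancelling $\prod_{e\mid p^{k-1}m}\Phi_e(x)$ from both sides yields
\begin{align*}
\prod_{e\mid m}\Phi_{p^k e}(x) &= \Big(\prod_{e\mid p^{k-1}m}\Phi_e(x)\Big)^{p-1} = \big(x^{p^{k-1}m}-1\big)^{p-1}\\
&= (x^m-1)^{p^{k-1}(p-1)} = \prod_{e\mid m}\Phi_e(x)^{\varphi(p^k)}
\end{align*}
in $\F_p[x]$, where I used $\varphi(p^k) = p^{k-1}(p-1)$. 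By the inductive hypothesis $\Phi_{p^k e}(x) = \Phi_e(x)^{\varphi(p^k)}$ over $\F_p$ for every proper divisor $e$ of $m$; cancelling all of these common factors leaves $\Phi_{p^k m}(x) = \Phi_m(x)^{\varphi(p^k)}$ over $\F_p$, which is the claim (the base case $m=1$ being the displayed identity, with nothing to cancel). As an alternative to the induction, one can instead reduce the classical identity $\Phi_{p^k m}(x) = \Phi_m(x^{p^k})/\Phi_m(x^{p^{k-1}})$ of $\Z[x]$ modulo $p$ and simplify the right-hand side with the Frobenius relation.

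This is a standard fact and I do not expect a genuine obstacle; the only point demanding care is the bookkeeping in case (2) --- keeping the index sets of the divisor products straight when cancelling, and (for the alternative argument) making sure the cyclotomic identity one reduces modulo $p$ actually holds over $\Z$, rather than only over $\F_p$ after the Frobenius collapse, so that the cancellations are legitimate.
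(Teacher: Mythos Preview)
Your proof is correct and follows the standard route. Note, however, that the paper does not supply its own proof of this proposition: it is stated there as a ``well-known proposition'' with a reference to an exercise in Dummit--Foote, and is invoked without argument. So there is nothing in the paper to compare your approach against; you have simply filled in a proof the author chose to cite rather than write out.
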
 Thus, the correspondence between the definitions follows from the fact that each cyclotomic polynomial $\Phi_d(x)$ dividing $x^n-1$ factors into $\varphi(d)/\ell_p^*(d)$ irreducible polynomials of degree $\ell_p^*(d)$ over $\F_p[x].$  

As we will discuss in the next section, the $\lambda$-practical numbers can be defined in a similar manner. However, it takes a bit more work to prove this. 

%%%P-practical for all $p$
\section{An alternative characterization for the $\lambda$-practical numbers}

Just as we showed that the $\varphi$-practical and $p$-practical numbers have alternative characterizations that resemble the definition of a practical number, we can also show that the $\lambda$-practical numbers have such a characterization. Let $\lambda(n)$ denote the universal exponent of the multiplicative group of integers modulo $n.$ We will show that the following theorem gives a criterion for an integer $n$ to be $\lambda$-practical that is equivalent to the definition that we gave in Section 1: \begin{theorem}\label{lambda} An integer $n$ is \textit{$\lambda$-practical} if and only if we can write every integer $m$ with $1 \leq m \leq n$ in the form $m = \sum_{d \mid n} \lambda(d) m_{d}$, where $m_{d}$ is an integer with $0 \leq m_{d} \leq \frac{\varphi(d)}{\lambda(d)}$.\end{theorem}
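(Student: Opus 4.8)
The plan is to reduce everything to the $p$-practical characterization of Lemma~\ref{pcondsum}. Two facts drive the argument: first, that $\ell_p^*(d)\mid\lambda(d)$ for every prime $p$ and every $d\mid n$; second, Dirichlet's theorem on primes in arithmetic progressions.

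For the ``if'' direction, suppose every $m$ with $1\le m\le n$ has a representation $m=\sum_{d\mid n}\lambda(d)m_d$ with $0\le m_d\le\varphi(d)/\lambda(d)$, and fix a prime $p$. Since $d_{(p)}\mid d$ we have $\lambda(d_{(p)})\mid\lambda(d)$, and since $\ell_p^*(d)=\ell_p(d_{(p)})$ divides $\lambda(d_{(p)})$ it divides $\lambda(d)$; write $\lambda(d)=c_d\,\ell_p^*(d)$ with $c_d\in\Z_{\ge 1}$. Then $m=\sum_{d\mid n}\ell_p^*(d)(c_d m_d)$ with $0\le c_d m_d\le c_d\varphi(d)/\lambda(d)=\varphi(d)/\ell_p^*(d)$, so Lemma~\ref{pcondsum} shows $n$ is $p$-practical. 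As $p$ was arbitrary, $n$ is $\lambda$-practical.

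For the ``only if'' direction the crux is to exhibit one prime $p_0$ with $\ell_{p_0}^*(d)=\lambda(d)$ for every $d\mid n$; once this is done, Lemma~\ref{pcondsum} applied to $p_0$ becomes word-for-word the desired representation, and it is available because a $\lambda$-practical $n$ is in particular $p_0$-practical. To build $p_0$ I would insist that $p_0\nmid n$, so that $\ell_{p_0}^*=\ell_{p_0}$ on divisors of $n$, and then, using $\ell_{p_0}(d)=\operatorname{lcm}_{q^b\,\|\,d}\ell_{p_0}(q^b)$ and the companion identity for $\lambda$, reduce to securing $\ell_{p_0}(q^b)=\lambda(q^b)$ for every prime power $q^b$ dividing $n$. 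For an odd prime power $q^a\,\|\,n$ it is enough that $p_0$ be a primitive root mod $q^a$, since surjectivity of $(\Z/q^a\Z)^\times\to(\Z/q^b\Z)^\times$ propagates this to all $b\le a$; such $p_0$ form a nonempty union of residue classes mod $q^a$. The prime $2$ must be handled by hand, because an element of maximal order mod $2^a$ need not reduce to one mod $2^b$ (the group is not cyclic); here it suffices to require $p_0\equiv 3\pmod 4$ together with $\ell_{p_0}(2^a)=\lambda(2^a)$ where $2^a\,\|\,n$, equivalently $p_0\equiv -5\pmod{2^a}$, and one checks directly that this forces $\ell_{p_0}(2^b)=\lambda(2^b)$ for all $b\le a$. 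Assembling these conditions over the prime powers exactly dividing $n$ by the Chinese Remainder Theorem yields a nonempty set of residue classes mod $n$, each coprime to $n$; Dirichlet's theorem then supplies a prime $p_0$ in it, completing the proof.

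The main obstacle is this construction of $p_0$, and its only delicate ingredient is the behaviour at $p=2$: maximality of order is not inherited under reduction, so one cannot simply take a prime that is a primitive root ``mod the $2$-part'' and must instead impose the explicit condition above. Everything else is routine manipulation of the two multiplicative characterizations together with an appeal to Dirichlet's theorem.
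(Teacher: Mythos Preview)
Your proof is correct and follows essentially the same route as the paper: the ``only if'' direction is exactly the paper's Lemma~\ref{key} (the paper simply takes $a_i=3$ at the prime $2$ rather than your $-5$, but both work), and the ``if'' direction is the paper's argument with the divisibility $\ell_p^*(d)\mid\lambda(d)$ made explicit, which is in fact cleaner than the paper's appeal to the mere inequality $\ell_p^*(d)\le\lambda(d)$.
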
 Before presenting the proof, however, we will pause to ponder a related question. We can think of the set of integers $n$ that are ``$p$-practical for all primes $p$'' as the intersection between all of the sets of integers that are $p$-practical. In addition to describing the intersection of these sets, we can also describe their union. 

\begin{proposition}\label{union} For each prime $p$, let $S_p$ be the set of $p$-practical numbers. Then $$\bigcup_{p \ \mathrm{prime}} S_p = \Z_+.$$ \end{proposition}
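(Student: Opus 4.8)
The plan is to prove the nontrivial inclusion $\Z_+ \subseteq \bigcup_{p}S_p$ by showing that every positive integer $n$ is $p$-practical for a suitably chosen prime $p$; the reverse inclusion is immediate since each $S_p \subseteq \Z_+$. The prime I would pick is any $p$ with $p \equiv 1 \pmod{n}$, which exists by Dirichlet's theorem on primes in arithmetic progressions (one only needs the classical special case of the progression $1 \bmod n$). For $n = 1$ there is nothing to prove, and for $n > 1$ the congruence $p \equiv 1 \pmod n$ automatically gives $(p,n) = 1$.

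First I would record why such a $p$ works. Since $\F_p^{\times}$ is cyclic of order $p - 1$ and $n \mid p-1$, the field $\F_p$ contains $n$ distinct $n$-th roots of unity, so $x^n - 1$ splits over $\F_p$ into a product of $n$ distinct linear factors. Taking the product of any $k$ of these factors yields a divisor of $x^n - 1$ in $\F_p[x]$ of degree $k$ for each $0 \le k \le n$, so $n$ is $p$-practical. Equivalently, one can argue from Lemma~\ref{pcondsum}: every $d \mid n$ satisfies $(d,p) = 1$ and $p \equiv 1 \pmod d$, hence $\ell_p^*(d) = \ell_p(d) = 1$, so the required representation becomes $m = \sum_{d \mid n} n_d$ with $0 \le n_d \le \varphi(d)$; because $\sum_{d\mid n}\varphi(d) = n$, every $m$ with $1 \le m \le n$ is obtained by incrementing the $n_d$ one unit at a time.

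There is essentially no obstacle here: the only input beyond elementary manipulation is the existence of a prime in the arithmetic progression $1 \bmod n$. If one prefers to sidestep Dirichlet, the same prime can be produced elementarily from a prime divisor of $\Phi_n(t)$ for a suitable integer $t$, using the standard fact that any prime $p \mid \Phi_n(t)$ with $p \nmid n$ satisfies $p \equiv 1 \pmod n$; but citing Dirichlet keeps the write-up shortest.
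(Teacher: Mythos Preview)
Your argument is correct and matches the paper's own: pick a prime $p\equiv 1\pmod n$ via Dirichlet, so that $x^n-1$ splits into distinct linear factors over $\F_p$ and $n$ is $p$-practical. The paper phrases this the same way (noting in addition that infinitely many such $p$ exist), so there is no substantive difference.
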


%\begin{proof} If $n = 1$ then $n$ is $\varphi$-practical, hence it is $p$-practical for all primes $p$. If $n > 1$, then we can write $n = p_1^{e_1} \cdots p_k^{e_k}$, with $p_1 < p_2 < \cdots < p_k$ and $e_i \geq 1$ for $i = 1,...,k.$ Let $p_l^{e_l} = \mathrm{max}(p_1^{e_1}, \cdots, p_k^{e_k}).$ Since $n > 1$, then $n$ will always have a maximal prime power dividing it. From Lemma \ref{binomial}, $p_l^{e_l}$ is $p_l$-practical. Since $p_l^{e_l} > p_i^{e_i}$ for all $i \neq l$, then $p_i \leq p_l^{e_l} + 1.$ Thus, by Lemma \ref{pnecessary}, $n$ is $p_l$-practical. \end{proof}

In fact, we can prove a stronger result: it turns out that each integer $n$ is $p$-practical for infinitely many values of $p$. Namely, for a given $n$, Dirichlet's Theorem on Primes in Arithmetic Progressions \cite[pp. 119]{pollack} implies that there are infinitely many primes $p$ for which $p \equiv 1 \pmod{n}.$ In other words, $\ell_p(n) = 1$ for infinitely many primes $p$, so $x^n-1$ splits completely into linear factors in $\F_p[x]$ for infinitely many primes $p$. This argument implies that each integer $n$ is $p$-practical for a positive proportion of the $p$'s. We could also observe that, if $p \equiv -1 \pmod{n}$, then $\ell_p(n) = 2$, hence all of the irreducible factors of $x^n-1$ have degree at most $2$. Dirichlet's Theorem also guarantees the existence of infinitely many such primes. We remark that there are integers $n$ for which the set of primes $p \equiv \pm 1 \pmod{n}$ are the only primes for which $x^n-1$ has a divisor of every degree ($n = 5$ is the smallest such integer). 

We will now show that $\bigcup_{p \ \mathrm{prime}} S_p$ is precisely the set of integers satisfying the conditions given in Theorem \ref{lambda}. 

\begin{lemma}\label{key} For all positive integers $n$, there exists a prime $p$ such that $\ell_p^*(d) = \lambda(d)$ for all $d \mid n$. \end{lemma}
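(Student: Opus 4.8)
The plan is to produce the prime $p$ via Dirichlet's theorem, prescribing its residue modulo $n$ so that $p$ behaves like a primitive root to every relevant modulus. The first step is a reduction. If we succeed in choosing $p$ with $p \nmid n$, then $d_{(p)} = d$ and hence $\ell_p^*(d) = \ell_p(d)$ for every $d \mid n$, so it suffices to arrange $\ell_p(d) = \lambda(d)$ for all $d \mid n$. Moreover, writing a divisor $d$ of $n$ as a product $d = \prod_i q_i^{b_i}$ of prime powers, the isomorphism $(\Z/d\Z)^\times \cong \prod_i (\Z/q_i^{b_i}\Z)^\times$ gives $\ell_p(d) = \operatorname{lcm}_i \ell_p(q_i^{b_i})$, while $\lambda(d) = \operatorname{lcm}_i \lambda(q_i^{b_i})$ by the standard property of Carmichael's function. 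So the whole problem reduces to finding a prime $p$, coprime to $n$, with $\ell_p(q^b) = \lambda(q^b)$ for every prime power $q^b$ dividing $n$.

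Next I would build the required residue class. Write $n = 2^{a_0} q_1^{a_1}\cdots q_k^{a_k}$ with the $q_i$ odd primes. For each $i$, fix a primitive root $g_i$ modulo $q_i^{a_i}$; since the reduction map $(\Z/q_i^{a_i}\Z)^\times \to (\Z/q_i^{b}\Z)^\times$ is a surjection of cyclic groups, $g_i$ is also a primitive root modulo $q_i^{b}$ for every $b \le a_i$, i.e.\ $\ell_{g_i}(q_i^{b}) = \varphi(q_i^b) = \lambda(q_i^b)$. For the $2$-part: if $a_0 \le 1$ impose only that $p$ be odd; if $a_0 \ge 2$ require $p \equiv 3 \pmod{2^{a_0}}$, invoking the standard fact that $3$ has order $2^{k-2}$ modulo $2^{k}$ for every $k \ge 3$ (together with order $2$ modulo $4$ and order $1$ modulo $2$), so that $\ell_p(2^b) = \lambda(2^b)$ for every $b \le a_0$. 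These congruence conditions are on pairwise coprime moduli, so the Chinese Remainder Theorem yields a residue class $r \pmod n$ satisfying all of them simultaneously, with $\gcd(r,n)=1$ by construction. Dirichlet's theorem then supplies a prime $p \equiv r \pmod n$, and $\gcd(r,n)=1$ forces $p \nmid n$. For every prime power $q^b \mid n$ we have $\ell_p(q^b) = \lambda(q^b)$, and the reduction of the first paragraph completes the proof.

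The only step requiring genuine care is the $2$-part, precisely because $(\Z/2^k\Z)^\times$ is not cyclic once $k \ge 3$: there is no ``primitive root'' to invoke, and one must instead exhibit a single residue whose order equals $\lambda(2^b)$ for \emph{every} $b$ up to $a_0$ at once. Verifying that $3$ does this (equivalently, that $3$ attains the maximal order $2^{b-2} = \lambda(2^b)$ in $(\Z/2^b\Z)^\times$ for each $b \ge 3$) is elementary but is the crux of the argument; the odd part is handled uniformly by ordinary primitive roots, and assembling everything is a routine application of the Chinese Remainder Theorem and Dirichlet's theorem.
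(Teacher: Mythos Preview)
Your proof is correct and follows essentially the same route as the paper: primitive roots for the odd prime-power parts, the residue $3$ for the $2$-part, then the Chinese Remainder Theorem and Dirichlet's theorem to produce the prime $p$. Your write-up is in fact slightly more explicit than the paper's in making the reduction from $\ell_p^*$ to $\ell_p$ and in decomposing $\ell_p(d)$ and $\lambda(d)$ via $\operatorname{lcm}$'s over prime powers, but the underlying argument is the same.
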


\begin{proof} First, we will consider the case where $n = q^{e}$, where $q$ is an odd prime. Each divisor of $n$ is of the form $d = q^{f}$, with $0 \leq f \leq e.$ Since $(\Z/q^{f} \Z)^\times$ is cyclic, there must be some element $a \in (\Z/q^{e} \Z)^\times$ such that $\ell_a^*(q^{e}) = \lambda(q^{e}).$ But $a$ is also a generator for $(\Z/q^{f} \Z)^\times$, i.e. $\ell_a^*(q^{f}) = \lambda(q^{f}).$ By Dirichlet's Theorem, there exists a prime $p \equiv a \pmod{q^{e}}.$ Thus, we can certainly find a prime $p$ with $\ell_p^*(q^{f}) = \lambda(q^{f})$ for all $f$ with $0 \leq f \leq e.$ 

If $n = 2^{e}$, we observe that, when $p = 3$, we have $\ell_p^*(2^j) = \lambda(2^j)$ for all $j \geq 1$. Hence, $\ell_3^*(d) = \lambda(d)$ for all divisors $d$ of $2^{e}$. 

Now we consider the case where $n = q_1^{e_1} \cdots q_k^{e_k}$, $k \geq 2$. Each $d \mid n$ can be written in the form $d = q_1^{f_1} \cdots q_k^{f_k}$, where $0 \leq f_i \leq e_i$ holds for $i = 1,...,k.$ For each $i$, if $q_i$ is odd, let $a_i$ be a primitive root (mod $q_i^{e_i}$), and if $q_i = 2$, take $a_i = 3$. Since $q_1,...,q_k$ are pairwise relatively prime then, by the Chinese Remainder Theorem, there exists an integer $x$ with \begin{align*} x &\equiv a_1 \pmod{q_1^{e_1}} \\
&\hspace{5.5 pt} \vdots \\
x & \equiv a_k \pmod{q_k^{e_k}}.\end{align*} By Dirichlet's Theorem, there exists a prime $p$ with $p \equiv x \pmod{n}$. In other words, $\ell_p^*(q_i^{e_i}) = \lambda(q_i^{e_i})$ for $i = 1,...,k.$ As remarked earlier, we have $\ell_p^*(q_i^{f_i}) = \lambda(q_i^{f_i})$ for all $f_i$ with $0 \leq f_i \leq e_i.$ Therefore, since $q_1,...,q_k$ are pairwise relatively prime, we have $$\ell_p^*(q_1^{f_1} \cdots q_k^{f_k}) = \mathrm{lcm} [\ell_p^*(q_1^{f_1}),\cdots,\ell_p^*(q_k^{f_k})] = \mathrm{lcm}[\lambda(q_1^{f_1}),\cdots,\lambda(q_k^{f_k})] = \lambda(q_1^{f_1} \cdots q_k^{f_k}).$$\end{proof} 

Below, we provide the proof of Theorem \ref{lambda}.

\begin{proof} If $n$ is $\lambda$-practical then, by Lemma \ref{key}, there exists a prime $p'$ such that $\ell_{p'}^*(d) = \lambda(d)$ for all $d \mid n$. Since $n$ is $p$-practical for all primes $p$ then, in particular, $n$ is $p'$-practical, i.e. for all integers $m$ with $1 \leq m \leq n$, we have $$m = \sum_{d \mid n} \ell_{p'}^*(d) n_{p'}(d),$$ where $n_{p'}(d)$ is an integer satisfying $0 \leq n_{p'}(d) \leq \frac{\varphi(d)}{\ell_{p'}^*(d)}.$ Thus, for all $m$ with $1 \leq m \leq n$, we have $$m = \sum_{d \mid n} \lambda(d) n_{p'}(d),$$ since $\ell_{p'}^*(d) = \lambda(d)$ for all $d \mid n$. Since it is necessarily the case that $0 \leq n_{p'}(d) \leq \frac{\varphi(d)}{\ell_{p'}^*(d)} = \frac{\varphi(d)}{\lambda(d)}$, then $n$ satisfies the condition given in Theorem \ref{lambda}. 

On the other hand, suppose that every integer $m$ with $1 \leq m \leq n$ can be written in the form $m = \sum_{d \mid n} \lambda(d) m_d,$ where $m_d$ is an integer satisfying $0 \leq m_d \leq \frac{\varphi{d}}{\lambda(d)}.$ By definition, $\lambda(d) = \mathrm{max}_{a \in (\Z/d\Z)^\times} \ell_a^*(d).$ Since $\ell_a^*(d) \leq \lambda(d)$ for all $a$ in $(\Z/d\Z)^\times$, then certainly every $m$ with $1 \leq m \leq n$ can be written in the form $m = \sum_{d \mid n} \ell_p^*(d) n_d$, where $p$ is any rational prime and $0 \leq n_d \leq \frac{\varphi(d)}{\ell_p^*(d)}.$ Thus, $n$ is $\lambda$-practical.  \end{proof}

%%%The relationship between $\lambda$-practical and $\varphi$-practical numbers
\section{Key lemmas}

In this section, we provide some key lemmas for characterizing the relationship between $\varphi$-practical and $\lambda$-practical numbers. These lemmas will be used in section \ref{lambdaphisection} in order to obtain information about the relative asymptotic densities of these sets. We begin by reminding the reader of some useful results on the $\varphi$-practical numbers. In \cite{thompson}, we proved the following necessary condition for an integer $n$ to be $\varphi$-practical: 

\begin{lemma}\label{necessary} Suppose that $n = p_1^{e_1} \cdots p_k^{e_k}$ is $\varphi$-practical, where $p_1 < p_2 < \cdots < p_k$ and $e_i \geq 1$ for $i = 1,...,k$. Define $m_i = p_1^{e_1} \cdots p_i^{e_i}$ for $i = 0,...,k-1$. Then, the inequality $p_{i+1} \leq m_i + 2$ must hold for all $i$. \end{lemma}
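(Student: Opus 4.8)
The plan is to mimic the classical Stewart-type argument for practical numbers (Lemma~\ref{stewartkey}), adapting it to the additive structure of the $\varphi$-practical condition given by Lemma~\ref{phicond}. Suppose $n = p_1^{e_1}\cdots p_k^{e_k}$ is $\varphi$-practical with $p_1 < \cdots < p_k$. Fix an index $i$ with $0 \le i \le k-1$ and set $m_i = p_1^{e_1}\cdots p_i^{e_i}$ (so $m_0 = 1$). I want to show $p_{i+1} \le m_i + 2$. The key observation is that the divisors $d$ of $n$ contributing a term $\varphi(d)$ to a representation $m = \sum_{d \in \mathcal D}\varphi(d)$ of a \emph{small} target $m$ must themselves be small; more precisely, if $d \mid n$ and $p_{i+1} \mid d$, then $p_{i+1} - 1 \mid \varphi(d)$, so $\varphi(d) \ge p_{i+1} - 1$. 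Hence any divisor $d$ with $\varphi(d) \le p_{i+1} - 2$ must be composed only of the primes $p_1, \dots, p_i$, i.e. $d \mid m_i$.

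The main step is then a counting/summation estimate. Consider the target value $m = p_{i+1} - 1$ (assuming for contradiction that $p_{i+1} \ge m_i + 3$, so that $m = p_{i+1}-1 \ge m_i + 2 > m_i$, ensuring $m$ is not itself trivially a divisor sum over divisors of $m_i$, and also $1 \le m \le n$ since $p_{i+1} \le n$). In any representation $m = \sum_{d\in\mathcal D}\varphi(d)$, every divisor $d \in \mathcal D$ satisfies $\varphi(d) \le m = p_{i+1}-1 < p_{i+1}-1+1$, wait—I need $\varphi(d) \le p_{i+1}-1$; if some $d \in \mathcal D$ had $p_{i+1}\mid d$ then $\varphi(d)\ge p_{i+1}-1 = m$, forcing $\mathcal D = \{d\}$ and $\varphi(d) = p_{i+1}-1$, which happens only if $d = p_{i+1}$ exactly; but then we would need to also represent $m' = p_{i+1}-2$, and \emph{that} representation can use only divisors $d$ with $\varphi(d) \le p_{i+1}-2$, all of which divide $m_i$. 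So in fact the cleanest route is: represent $m = p_{i+1} - 2$. Every divisor $d$ appearing must have $\varphi(d) \le p_{i+1}-2$, hence $d \mid m_i$. Therefore $p_{i+1} - 2 = \sum_{d \in \mathcal D}\varphi(d) \le \sum_{d \mid m_i}\varphi(d) = m_i$, which is exactly the inequality $p_{i+1} \le m_i + 2$. One must separately check the edge case $p_{i+1} - 2 \le 0$ (i.e. $p_{i+1} = 2$, only possible when $i = 0$ and $p_1 = 2$), where the inequality $p_1 = 2 \le m_0 + 2 = 3$ holds trivially, and also note $p_{i+1}-2 \ge 1$ lies in the valid range $[1,n]$.

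The genuinely delicate point—the one I expect to be the main obstacle—is ensuring that the target $m = p_{i+1} - 2$ actually forces \emph{all} contributing divisors to divide $m_i$, i.e. handling the interaction when several of the primes $p_{i+1}, \dots, p_k$ could in principle appear. This is resolved by the monotone growth of the Euler $\varphi$ along the chain: if $d \mid n$ and $d$ is divisible by any prime $p_j$ with $j \ge i+1$, then $\varphi(d) \ge p_j - 1 \ge p_{i+1} - 1 > p_{i+1} - 2 = m$, so such a $d$ simply cannot appear in a subset summing to $m$. Thus only divisors of $m_i$ are available, and the bound $\sum_{d\mid m_i}\varphi(d) = m_i$ closes the argument. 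I would also remark that this is where the hypothesis $e_i \ge 1$ for all $i$ (so that the $p_j$ are exactly the prime divisors of $n$) is used, and that the same idea recovers Stewart's criterion in the multiplicative-to-additive translation.
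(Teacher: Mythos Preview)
Your argument is correct. The paper does not actually reprove Lemma~\ref{necessary} here; it simply cites the author's earlier paper \cite{thompson}. However, the paragraph immediately following the lemma (proving Lemma~\ref{lnecessary}, the $\lambda$-practical analogue) sketches exactly the same mechanism you use: if $p_{i+1} > m_i + 2$, then any divisor $d$ of $n$ divisible by a prime $\ge p_{i+1}$ has $\varphi(d) \ge p_{i+1}-1$, so such $d$ cannot contribute to a representation of a target in the gap $(m_i,\,p_{i+1}-1)$, while divisors of $m_i$ alone can sum to at most $\sum_{d\mid m_i}\varphi(d)=m_i$. The paper phrases this with the target $m_i+1$; you use the target $p_{i+1}-2$. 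Either choice works and yields the same inequality, so your approach is essentially the intended one. Your handling of the edge case $p_{i+1}=2$ and the observation $\varphi(d)\ge p_j-1$ whenever $p_j\mid d$ are both fine. The only cosmetic issue is the false start with $m=p_{i+1}-1$, which you should excise in a clean write-up.
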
 

We say that an integer $n$ is \textit{weakly $\varphi$-practical} if all of its prime factors satisfy the inequality from Lemma \ref{necessary}. We note that the inequality in Lemma \ref{necessary} also gives a necessary condition for a positive integer $n$ to be $\lambda$-practical. Namely, if $p_{i+1} > m_i + 2$ for some $i$ such that $0 \leq i \leq k-1$ then, since $\lambda(p_{i+1}) = \varphi(p_{i+1}) = p_{i+1} - 1$, we have $\lambda(p_{i+1}) > m_i +1$. Since $m_i = \sum_{d \mid m_i} \lambda(d) \frac{\varphi(d)}{\lambda(d)}$, then $m_i + 1$ cannot be written as a sum of $\lambda(d)$'s, so such an $n$ would not be $\lambda$-practical. Thus, we have proven the following:

\begin{lemma}\label{lnecessary} Every $\lambda$-practical number is weakly $\varphi$-practical. \end{lemma}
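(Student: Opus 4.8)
The plan is to deduce this directly from the sum characterization of $\lambda$-practicality in Theorem~\ref{lambda}, imitating the argument behind the necessary condition in Lemma~\ref{necessary}. Write $n = p_1^{e_1}\cdots p_k^{e_k}$ with $p_1 < \cdots < p_k$, and set $m_0 = 1$ and $m_i = p_1^{e_1}\cdots p_i^{e_i}$ for $1 \le i \le k-1$ (the case $n = 1$ being vacuous). What must be shown is that $p_{i+1} \le m_i + 2$ for every $i$ with $0 \le i \le k-1$.

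I would argue by contradiction: suppose $p_{i+1} \ge m_i + 3$ for some such $i$. Since $m_i$ is a proper divisor of $n$, we have $m_i + 1 \le n$, so Theorem~\ref{lambda} furnishes a representation $m_i + 1 = \sum_{d \mid n} \lambda(d)\, m_d$ with $0 \le m_d \le \varphi(d)/\lambda(d)$ for each $d$. The key point is that any divisor $d$ of $n$ that does not divide $m_i$ is divisible by some $p_j$ with $j \ge i+1$; as $p_{i+1} \ge m_i + 3 \ge 4$, each such $p_j$ is odd, so $\lambda(p_j^{f_j}) = p_j^{f_j-1}(p_j - 1)$ divides $\lambda(d)$ and hence $\lambda(d) \ge p_j - 1 \ge p_{i+1} - 1 \ge m_i + 2 > m_i + 1$. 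Consequently $m_d = 0$ for every such $d$, since otherwise a single term of the sum would already exceed $m_i + 1$. Thus the representation involves only divisors of $m_i$, and
\[
m_i + 1 \;=\; \sum_{d \mid m_i} \lambda(d)\, m_d \;\le\; \sum_{d \mid m_i} \lambda(d)\cdot\frac{\varphi(d)}{\lambda(d)} \;=\; \sum_{d \mid m_i}\varphi(d) \;=\; m_i,
\]
a contradiction. Therefore $p_{i+1} \le m_i + 2$ for all $i$, which is exactly the assertion that $n$ is weakly $\varphi$-practical.

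There is no serious obstacle here; the only step requiring a little care is the lower bound $\lambda(d) \ge p_{i+1} - 1$ for divisors $d$ not dividing $m_i$, which rests on the fact that $\lambda$ of a product of coprime prime powers is the lcm of the individual values together with the observation that in the contradiction case the relevant primes are odd (so that $\lambda(p_j^{f_j}) \ge p_j - 1$, rather than the degenerate $\lambda(2) = 1$). Everything else is bookkeeping with the identity $\sum_{d \mid m}\varphi(d) = m$.
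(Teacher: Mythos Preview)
Your argument is correct and follows the same route as the paper's own proof: assume some $p_{i+1} > m_i + 2$, observe that then every divisor $d \nmid m_i$ has $\lambda(d) > m_i + 1$, and conclude that $m_i + 1$ cannot be represented since the divisors of $m_i$ contribute at most $\sum_{d\mid m_i}\varphi(d)=m_i$. If anything, you are more explicit than the paper in justifying the bound $\lambda(d) \ge p_{i+1}-1$ for $d\nmid m_i$ (and in noting that the relevant primes are odd), a step the paper leaves implicit.
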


The converse to Lemma \ref{lnecessary} is false. For example, $n = 9$ is weakly $\varphi$-practical but not $\lambda$-practical. However, we can show that the converse holds for even integers and for squarefree integers. In order to complete these proofs, we will need a lemma on the structure of $\lambda$-practical numbers. The following result (cf. \cite{thompson}[Lemma 4.1]) gives a partial characterization for the structure of $\varphi$-practical numbers:

\begin{lemma}\label{keylemmaphi} If $M$ is $\varphi$-practical and $p$ is prime with $(p, M) = 1$, then $M' = pM$ is $\varphi$-practical if and only if $p \leq M + 2$. Moreover, $M' = p^{k}M, k \geq 2$ is $\varphi$-practical if and only if $p \leq M + 1$. \end{lemma}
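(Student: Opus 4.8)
The plan is to argue entirely through the combinatorial reformulation of Lemma~\ref{phicond}, reducing each ``if and only if'' to a statement about unions of intervals. The bookkeeping device is to organize the divisors of $p^kM$ (with $(p,M)=1$) into the $k+1$ ``layers'' $\{p^jd : d\mid M\}$, $0\le j\le k$, on which $\varphi(p^jd)=\varphi(p^j)\varphi(d)$. Choosing a subset $\mathcal D$ of divisors of $p^kM$ is then the same as choosing, independently, subsets $\mathcal A_0,\dots,\mathcal A_k$ of the divisors of $M$, and the value represented is $\sum_{j=0}^k \varphi(p^j)\,s_j$ with $s_j=\sum_{d\in\mathcal A_j}\varphi(d)$. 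Since $M$ is $\varphi$-practical, each $s_j$ independently attains every integer in $[0,M]$ (the value $0$ via the empty set), so the set of integers representable by divisors of $p^kM$ is exactly
$$ R_k \;:=\; \Bigl\{\, s_0 + (p-1)\bigl(s_1 + p\,s_2 + \cdots + p^{k-1}s_k\bigr) \;:\; 0\le s_j\le M \,\Bigr\}, $$
and $p^kM$ is $\varphi$-practical precisely when $R_k\supseteq[1,p^kM]$.

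For the two ``if'' directions I would isolate two elementary facts. First, if $p\le M+1$ then a sum of ``digits'' in $[0,M]$ against the place values $1,p,\dots,p^{k-1}$ fills an entire interval, namely $\{s_1+ps_2+\cdots+p^{k-1}s_k : 0\le s_j\le M\}=[0,\,M(p^{k-1}+\cdots+p+1)]$; this is an induction on $k$ whose inductive step is simply that $\bigcup_u[pu,\,pu+M]$ is gap-free exactly when $p\le M+1$. Second, scaling an interval $[0,L]$ of integers by $p-1$ and then adding a summand in $[0,M]$ produces $\bigcup_c[(p-1)c,\,(p-1)c+M]$, which is gap-free exactly when $p-1\le M+1$, i.e.\ $p\le M+2$. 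Composing these: when $p\le M+1$ we get $R_k=[0,p^kM]$, so $p^kM$ is $\varphi$-practical for every $k\ge1$, in particular for $k\ge2$; and when $k=1$ the first (place-value) step is vacuous, so the weaker hypothesis $p\le M+2$ already gives $R_1=\{s_0+(p-1)s_1\}=[0,pM]$, which is the ``if'' half of the first assertion.

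For the two ``only if'' directions I would exhibit one integer that cannot be represented. If $p\ge M+3$, then in $R_1$ the quantity $M+1$ forces $s_1\ge1$, hence $s_0+(p-1)s_1\ge p-1\ge M+2>M+1$, a contradiction; so $pM$ being $\varphi$-practical implies $p\le M+2$. For the second assertion one must also rule out $p=M+2$, so assume only $p\ge M+2$ and test $m=pM+1$, which lies in $[1,p^kM]$ for $k\ge2$: layers $0$ and $1$ together contribute at most $M+(p-1)M=pM<pM+1$, while any contribution involving a layer $j\ge2$ makes the total at least $\varphi(p^2)=p(p-1)$, and $p(p-1)-(pM+1)=p(p-1-M)-1\ge p-1>0$; hence $pM+1\notin R_k$ and $p^kM$ is not $\varphi$-practical.

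The one genuinely delicate point --- what I expect to be the main obstacle --- is pinning down the asymmetry between the two statements: why the slack in ``$p\le M+2$'' survives for $k=1$ but collapses to ``$p\le M+1$'' for $k\ge2$. It is exactly the place-value factor $p$ coming from the layer-$2$ divisors $p^2d$ (with $\varphi(p^2d)=p(p-1)\varphi(d)$) that sharpens the covering condition from $p-1\le M+1$ to $p\le M+1$; checking that this is truly the binding constraint, and that no cleverer choice of the subsets $\mathcal A_j$ evades it --- which is precisely why it matters that the description of $R_k$ above is exact, not merely a lower bound --- is the crux. The remaining verifications are routine interval arithmetic.
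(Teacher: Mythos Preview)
Your argument is correct and matches the paper's approach: the paper does not re-prove Lemma~\ref{keylemmaphi} here (it is cited from \cite{thompson}), but the sketch it gives for the analogous $\lambda$-practical Lemma~\ref{weak lambda}---writing $l=(p-1)Q+R$ with $0\le Q,R\le m$ and inducting on the power of $p$---is exactly your layer decomposition specialized one step at a time. Your explicit description of the representable set $R_k$ is simply a more uniform packaging of the same idea, and your ``only if'' witnesses $M+1$ and $pM+1$ are the natural obstructions.
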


The statement of Lemma \ref{keylemmaphi} mirrors the statement of Lemma \ref{stewartkey}, with one important difference: although Lemma \ref{stewartkey} yields a necessary-and-sufficient condition for all numbers to be practical, Lemma \ref{keylemmaphi} cannot be used to generate the full list of $\varphi$-practical numbers. For example, $3^2 \times 5 \times 17 \times 257 \times 65537 \times (2^{31} - 1)$ is $\varphi$-practical, but none of the numbers $3^2$, $3^2 \times 5$, $3^2 \times 5 \times 17$, $3^2 \times 5 \times 17 \times 257$, or $3^2 \times 5 \times 17 \times 257 \times 65537$ are $\varphi$-practical. Nevertheless, Lemma \ref{keylemmaphi} is useful in obtaining a lower bound for the number of $\varphi$-practical numbers up to $X$. Likewise, to find a lower bound for the number of $\lambda$-practical numbers that fail to be $\varphi$-practical (which we shall accomplish in Section \ref{lambdaphisection}), we will make use of the following lemma:

\begin{lemma}\label{weak lambda} Let $n = mp$, where $m$ is $\lambda$-practical, $p \leq m + 2$ and $(p, m) = 1$. Then $n$ is $\lambda$-practical. Moreover, if $n = p^{k}m$ with $k \geq 2$, then $n$ is $\lambda$-practical if $p \leq m + 1.$ \end{lemma}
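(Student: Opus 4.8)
The plan is to work throughout with the additive description of $\lambda$-practicality furnished by Theorem~\ref{lambda}. For a positive integer $N$, put
\[ S(N) = \Bigl\{ \sum_{d \mid N} \lambda(d)\, c_d \ :\ c_d \in \Z,\ 0 \le c_d \le \varphi(d)/\lambda(d) \Bigr\}. \]
Since $\sum_{d \mid N} \varphi(d) = N$, we have $0 \in S(N)$ and $\max S(N) = N$, so Theorem~\ref{lambda} says precisely that $N$ is $\lambda$-practical if and only if $S(N) = \{0, 1, \dots, N\}$; in particular the hypothesis that $m$ is $\lambda$-practical gives $S(m) = \{0, 1, \dots, m\}$. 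Fix $p$ with $(p,m) = 1$ and write $n = p^k m$. Since every divisor of $n$ is uniquely $p^i d$ with $0 \le i \le k$ and $d \mid m$, the set $S(n)$ is the sumset $T_0 + T_1 + \dots + T_k$, where $T_0 = S(m)$ and, for $i \ge 1$,
\[ T_i = \Bigl\{ \sum_{d \mid m} \lambda(p^i d)\, c_d \ :\ 0 \le c_d \le \varphi(p^i d)/\lambda(p^i d) \Bigr\}. \]
Both assertions of the lemma will follow once we control the $T_i$ and then chain them together.

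First I would prove the key claim that $T_i \supseteq \varphi(p^i)\{0, 1, \dots, m\}$ for every $i \ge 1$. Given $s$ with $0 \le s \le m$, use the $\lambda$-practicality of $m$ to write $s = \sum_{d \mid m} \lambda(d)\, b_d$ with $0 \le b_d \le \varphi(d)/\lambda(d)$ (taking all $b_d = 0$ if $s = 0$), and set $c_d = \bigl(\varphi(p^i)/\lambda(p^i)\bigr)\gcd\bigl(\lambda(p^i),\lambda(d)\bigr)\, b_d$. The observation that makes this work is that $\varphi(p^i)/\lambda(p^i) \in \{1, 2\}$ is always a positive integer --- it is $1$ for odd $p$, and for $p = 2$ with $i \le 2$, and it is $2$ for $p = 2$ with $i \ge 3$ --- so $c_d$ is a nonnegative integer; then a direct computation using $\lambda(p^i d) = \mathrm{lcm}(\lambda(p^i), \lambda(d))$, $\varphi(p^i d) = \varphi(p^i)\varphi(d)$, and $\mathrm{lcm}(a,b)\gcd(a,b) = ab$ shows that $c_d \le \varphi(p^i d)/\lambda(p^i d)$ and $\sum_{d \mid m} \lambda(p^i d)\, c_d = \varphi(p^i)\, s$. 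This establishes the claim, treating $p = 2$ (where $\lambda(p^i) \neq \varphi(p^i)$) on exactly the same footing as odd $p$. I expect this rescaling step --- producing the right $c_d$ and verifying it respects the constraint --- to be the main point requiring care.

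The remaining step is an elementary interval computation: if $s \ge 1$ and $N \ge 0$ are integers with $s \le N + 1$, then $\{0, 1, \dots, N\} + s\{0, 1, \dots, m\} = \{0, 1, \dots, N + sm\}$, because the translates $\{sq, \dots, sq + N\}$ for $q = 0, \dots, m$ overlap or abut. I would then show, by induction on $j$, that $T_0 + T_1 + \dots + T_j \supseteq \{0, 1, \dots, m p^j\}$: the case $j = 0$ is $T_0 = S(m) = \{0, \dots, m\}$, and for the inductive step we add $T_j \supseteq \varphi(p^j)\{0, \dots, m\}$ to $\{0, \dots, m p^{j-1}\}$ and invoke the interval computation with $N = m p^{j-1}$ and $s = \varphi(p^j) = p^{j-1}(p-1)$, obtaining $\{0, \dots, m p^{j-1} + \varphi(p^j) m\} = \{0, \dots, m p^j\}$ --- provided $\varphi(p^j) \le m p^{j-1} + 1$, i.e.\ $p^{j-1}(p - 1 - m) \le 1$. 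This inequality holds for every $j \ge 1$ when $p \le m + 1$, and it holds for $j = 1$ as soon as $p \le m + 2$. Consequently, when $k = 1$ the hypothesis $p \le m + 2$ lets the induction reach $j = 1$, and when $k \ge 2$ the hypothesis $p \le m + 1$ lets it reach $j = k$; either way $S(n) = T_0 + \dots + T_k \supseteq \{0, 1, \dots, m p^k\} = \{0, 1, \dots, n\}$, so $n$ is $\lambda$-practical by Theorem~\ref{lambda}.
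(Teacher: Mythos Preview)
Your proof is correct and follows essentially the same approach as the paper's sketch: both use the characterization from Theorem~\ref{lambda}, cover $[0,n]$ by writing $l = \varphi(p^j)Q + R$ with $0 \le Q \le m$ and $R$ in the inductively obtained interval, and then rescale the $\lambda$-representation of $Q$ via the identity $\lambda(p^j d) = \mathrm{lcm}(\lambda(p^j),\lambda(d))$ to produce admissible coefficients for the divisors $p^j d$. Your sumset framing $S(n) = T_0 + \cdots + T_k$ and the explicit formula $c_d = \bigl(\varphi(p^i)/\lambda(p^i)\bigr)\gcd(\lambda(p^i),\lambda(d))\,b_d$ make the rescaling step---particularly the case $p = 2$, $i \ge 3$, where $\varphi(p^i) \ne \lambda(p^i)$---more transparent than the paper's one-line gesture toward it, but the underlying argument is the same.
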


The proof of Lemma \ref{weak lambda} is virtually identical to the proof of Lemma \ref{keylemmaphi}. The idea is to use the characterization of $\lambda$-practical numbers given in Theorem \ref{lambda} to show that every integer $l \in [1,n]$ can be expressed in the form $$l = \sum_{d \mid m} \lambda(d) m_d,$$ with $0 \leq m_d \leq \frac{\varphi(d)}{\lambda(d)}.$ In order to check that this holds when $n = mp$, we observe that if every $l \in [1,n]$ can be written in the form \begin{equation}\label{l1} l = (p-1) Q + R, \ 0 \leq Q, R \leq m\end{equation} then, using our hypothesis that $m$ is $\lambda$-practical, we have \begin{equation}\label{l2identity} l = \sum_{d \mid m} (p-1) \lambda(d) m_d + \sum_{d \mid m} \lambda(d) m'_d,\end{equation} where $0 \leq m_d, m'_d \leq \frac{\varphi(d)}{\lambda(d)}.$ We can use the facts that $\lambda(p) = p-1$ and $\lambda(p_1^{e_1} \cdots p_k^{e_k}) = \hbox{lcm}[\lambda(p_1^{e_1}),...,\lambda(p_k^{e_k})]$ to show that we can re-write \eqref{l2identity} in the following manner: $$l = \sum_{d \mid m} \lambda(pd) m_{pd} + \sum_{d \mid m} \lambda(d) m'_d,$$ where $0 \leq m_{pd} \leq \frac{\varphi(pd)}{\lambda(pd)}$ and $0 \leq m'_d \leq \frac{\varphi(d)}{\lambda(d)}.$ Thus, the proof boils down to showing that every $l \in [1,n]$ can be expressed as in \eqref{l1}, which follows from breaking $[1,n]$ into subintervals of the form $[(p-1)Q, (p-1)Q + m]$ and using the hypothesis that $p \leq m+2$ to show that the subintervals cover the full interval. The higher power case is similar, but requires induction on the power of the prime $p$. 

\begin{proposition}\label{even} Let $n$ be an even integer. Then $n$ is weakly $\varphi$-practical if and only if $n$ is $\varphi$-practical if and only if $n$ is $\lambda$-practical. \end{proposition}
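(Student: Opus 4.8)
The plan is to leverage the implications that already hold for every positive integer: $\varphi$-practical $\Rightarrow$ $\lambda$-practical (noted in the introduction, since each $\varphi$-practical number is $\lambda$-practical) and $\lambda$-practical $\Rightarrow$ weakly $\varphi$-practical (Lemma \ref{lnecessary}). Consequently it suffices to prove the one remaining implication in the even case, namely that an even weakly $\varphi$-practical $n$ is $\varphi$-practical; chaining this together with the two free implications yields the full three-way equivalence.

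So write $n = 2^{e_1} p_2^{e_2} \cdots p_k^{e_k}$ with $2 < p_2 < \cdots < p_k$, and let $m_i = 2^{e_1} p_2^{e_2} \cdots p_i^{e_i}$ as in Lemma \ref{necessary}, so that the weak $\varphi$-practicality of $n$ reads $p_{i+1} \leq m_i + 2$ for $i = 1, \dots, k-1$. The key point --- essentially the only idea needed --- is a parity sharpening: for $i \geq 1$ the integer $m_i$ is even, hence $m_i + 2$ is even and so cannot equal the odd prime $p_{i+1}$; combined with $p_{i+1} \leq m_i + 2$ this forces the stronger inequality $p_{i+1} \leq m_i + 1$. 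Thus, for even $n$, the weak $\varphi$-practical condition already furnishes the hypothesis that the prime-power part of Lemma \ref{keylemmaphi} requires.

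The proof is then an induction on $i$ establishing that each $m_i$ is $\varphi$-practical. For the base case I would verify directly that $2^{e_1}$ is $\varphi$-practical: by Lemma \ref{phicond} its divisors in $\Z[x]$ correspond to subsets of the multiset of degrees $\{\varphi(2^f) : 0 \leq f \leq e_1\} = \{1, 1, 2, 4, \dots, 2^{e_1 - 1}\}$, whose subset sums clearly cover every integer in $[0, 2^{e_1}]$. For the inductive step, assume $m_i$ is $\varphi$-practical; if $e_{i+1} = 1$ then Lemma \ref{keylemmaphi} applied with the inequality $p_{i+1} \leq m_i + 2$ shows $m_{i+1} = p_{i+1} m_i$ is $\varphi$-practical, while if $e_{i+1} \geq 2$ the second part of Lemma \ref{keylemmaphi} applied with the sharpened inequality $p_{i+1} \leq m_i + 1$ shows $m_{i+1} = p_{i+1}^{e_{i+1}} m_i$ is $\varphi$-practical. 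Setting $i = k$ gives that $n = m_k$ is $\varphi$-practical, closing the loop.

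I do not anticipate a serious obstacle: once the parity sharpening is observed, the remainder is bookkeeping with Lemma \ref{keylemmaphi}. The only point requiring a little care is the indexing --- the induction should be anchored at $m_1 = 2^{e_1}$, rather than at $m_0 = 1$, precisely so that ``$m_i$ is even'' is available at every step where Lemma \ref{keylemmaphi} is invoked with an exponent $\geq 2$.
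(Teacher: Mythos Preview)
Your proof is correct and takes essentially the same approach as the paper: the parity sharpening (each $m_i$ is even, so $p_{i+1} \leq m_i + 2$ forces $p_{i+1} \leq m_i + 1$) followed by induction via Lemma~\ref{keylemmaphi} is exactly what the paper does. Your use of the chain $\varphi$-practical $\Rightarrow$ $\lambda$-practical $\Rightarrow$ weakly $\varphi$-practical is marginally more economical than the paper's version, which instead reruns the same induction with Lemma~\ref{weak lambda} in place of Lemma~\ref{keylemmaphi} to establish the $\lambda$-practical equivalence separately.
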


\begin{proof} We will begin by showing that an even integer $n$ is weakly $\varphi$-practical if and only if it is $\varphi$-practical. If $n$ is even and weakly $\varphi$-practical, then we can write $n = p_1^{e_1} \cdots p_k^{e_k}$, where $2 = p_1 < p_2 < \cdots < p_k$ and $e_i \geq 1$ for $i = 1,...,k.$ We will use induction on the number of distinct prime factors of $n$ to show that $n$ is $\varphi$-practical. For our base case, we observe that $2^{e_1}$ is $\varphi$-practical for all positive values of $e_1.$ For our induction hypothesis, we assume that $m = p_1^{e_1} \cdots p_{k-1}^{e_{k-1}}$ is $\varphi$-practical. Since $m$ is even and $p_k$ is odd, then $p_k \leq m + 2$ implies that $p_k \leq m + 1$. Thus, by Lemma \ref{keylemmaphi}, $m p_k^{e_k}$ is $\varphi$-practical. The other direction follows immediately from Lemma \ref{necessary}. The proof for $\lambda$-practicals is the same, this time using Lemma \ref{weak lambda} instead of Lemma \ref{keylemmaphi}.\end{proof}

As we remarked above, the conditions given in Lemma \ref{necessary} for an integer $n$ to be weakly $\varphi$-practical are necessary, but not sufficient, for $n$ to be $\varphi$-practical. However, when $n$ is squarefree, we have shown (cf. Corollary 4.2 in \cite{thompson}) that these notions are equivalent. There is an analogous situation for $\lambda$-practical numbers. 

\begin{proposition} Let $n$ be a squarefree integer. Then $n$ is $\lambda$-practical if and only if it is $\varphi$-practical. \end{proposition}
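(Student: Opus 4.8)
The plan is brief, since the statement falls out of Lemma~\ref{lnecessary} combined with the squarefree case of the $\varphi$-practical theory already established in \cite{thompson}. One direction needs nothing new: as noted in Section~1, every $\varphi$-practical number is $\lambda$-practical, because reducing a monic factorization of $x^n-1$ in $\Z[x]$ modulo a prime $p$ yields divisors of $x^n-1$ in $\F_p[x]$ of the same degrees. So it remains only to show that a squarefree $\lambda$-practical $n$ is $\varphi$-practical.

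Thus let $n = p_1 \cdots p_k$ with $p_1 < \cdots < p_k$ be squarefree and $\lambda$-practical. By Lemma~\ref{lnecessary}, $n$ is weakly $\varphi$-practical: writing $m_i = p_1 \cdots p_i$ for $0 \le i \le k-1$, the inequalities $p_{i+1} \le m_i + 2$ hold for all $i$. Corollary~4.2 of \cite{thompson} asserts precisely that a squarefree integer obeying these inequalities is $\varphi$-practical, so $n$ is $\varphi$-practical, completing the proof.

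I would also record the essentially equivalent self-contained route, which mirrors the proof of Proposition~\ref{even}: one inducts on $k$ to show directly that a squarefree weakly $\varphi$-practical integer is $\varphi$-practical. The base case $k \le 1$ forces $p_1 \le 3$, and $x^2 - 1$, $x^3 - 1$ visibly have divisors of every degree. For the inductive step, $m = m_{k-1} = p_1 \cdots p_{k-1}$ is again squarefree and weakly $\varphi$-practical (weak $\varphi$-practicality is a condition on initial segments of the ordered prime factorization, hence is inherited by truncations), so $m$ is $\varphi$-practical by the inductive hypothesis; since $p_k \le m + 2$ and $(p_k, m) = 1$, Lemma~\ref{keylemmaphi} gives that $n = m p_k$ is $\varphi$-practical. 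Either way there is no genuine obstacle: the only point worth checking is the inheritance of the defining inequalities by the truncation $m$, which is immediate from the definition of weak $\varphi$-practicality.
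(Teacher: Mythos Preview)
Your proof is correct and follows exactly the paper's argument: use Lemma~\ref{lnecessary} to pass from $\lambda$-practical to weakly $\varphi$-practical, then invoke Corollary~4.2 of \cite{thompson} for the squarefree equivalence with $\varphi$-practical, with the reverse direction being trivial. The additional self-contained inductive route you record is a harmless elaboration (essentially reproving Corollary~4.2 via Lemma~\ref{keylemmaphi}), not a departure from the paper's approach.
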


\begin{proof} We showed in Corollary 4.2 of \cite{thompson} that a squarefree integer is $\varphi$-practical if and only if it is weakly $\varphi$-practical. From Lemma \ref{lnecessary}, every $\lambda$-practical number is weakly $\varphi$-practical. The other direction of the proof is trivial, as all $\varphi$-practical numbers are automatically $\lambda$-practical. \end{proof}

While it is easy to see that all $\varphi$-practical numbers are $\lambda$-practical, the converse does not hold. In fact, it is easy to show that there are infinitely many counterexamples:

\begin{proposition}\label{infl} There are infinitely many $\lambda$-practical numbers that are not $\varphi$-practical. \end{proposition}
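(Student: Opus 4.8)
The plan is to construct an explicit infinite family of $\lambda$-practical numbers that are not $\varphi$-practical, using Lemma \ref{weak lambda} to guarantee $\lambda$-practicality and Lemma \ref{necessary} (or Lemma \ref{keylemmaphi}) to rule out $\varphi$-practicality. The natural source of counterexamples is the discrepancy, already flagged after Lemma \ref{lnecessary}, between the conditions $p \le m+2$ and $p \le m+1$: an odd prime power $p^k$ with $k \ge 2$ can be appended to a $\lambda$-practical $m$ when $p \le m+1$, but Lemma \ref{keylemmaphi} forbids appending $p^k$ to a $\varphi$-practical $m$ unless $p \le m+1$ as well, and more importantly the case $n = 9$ already shows $9$ is weakly $\varphi$-practical but not $\lambda$-practical, so I should instead look for $n$ that \emph{is} $\lambda$-practical but fails the $\varphi$-practical divisor-sum condition of Lemma \ref{phicond}.

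Concretely, I would take $n = 2^a \cdot p$ for a suitable fixed odd prime $p$ and vary $a$, \emph{or} — cleaner — build the family by starting from a known small $\lambda$-practical-but-not-$\varphi$-practical number and multiplying by primes that keep us inside the $\lambda$-practical set via Lemma \ref{weak lambda}. The key observation is this: if $n_0$ is $\lambda$-practical but not $\varphi$-practical, and $p$ is a prime with $p \le n_0 + 2$ and $(p,n_0)=1$, then by Lemma \ref{weak lambda} the number $n_0 p$ is again $\lambda$-practical; and I claim it is still not $\varphi$-practical, because a divisor of $x^{n_0 p}-1$ of degree $m$ for a value of $m$ that was unattainable for $x^{n_0}-1$ would, after reducing modulo the $\varphi(d)$-contributions of divisors $d \mid n_0 p$ with $p \mid d$, force an unattainable degree for $x^{n_0}-1$ — here one uses that $\varphi(pd) = (p-1)\varphi(d)$ together with the failure of the Lemma \ref{phicond} representation at $n_0$. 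Iterating gives infinitely many distinct examples. The simplest explicit seed is to exhibit one $n_0$ directly: a good candidate is an odd prime power times appropriate factors, e.g. verifying by hand that some specific $n_0$ (along the lines of $n_0 = 3^2 \cdot 5 \cdot \dots$, or even just a carefully chosen two- or three-prime example) is $\lambda$-practical via Theorem \ref{lambda} yet fails Lemma \ref{phicond}.

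The main obstacle will be the propagation step: showing that multiplying a non-$\varphi$-practical $\lambda$-practical number by an admissible prime preserves non-$\varphi$-practicality. Non-$\varphi$-practicality is a statement about the \emph{non-existence} of a subset-sum representation, so one cannot simply invoke a monotone-type closure; I will need to identify a specific $m \le n_0$ not of the form $\sum_{d \in \mathcal D}\varphi(d)$ with $\mathcal D \subseteq \{d : d \mid n_0\}$ and argue that $m$ (or $m$ shifted by a controlled amount) remains unrepresentable over divisors of $n_0 p$. The cleanest route is probably to choose the seed so that the obstruction is ``small'' — e.g. $m$ lies below $P^-(p\text{-part})$ in a way that any divisor $d$ of $n_0 p$ divisible by $p$ contributes $\varphi(d) \ge p-1 > m$, so such $d$ cannot appear in a representation of $m$, reducing the representation problem for $n_0 p$ at the value $m$ to the one for $n_0$, which fails by hypothesis. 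If that reduction is too delicate in general, the fallback is to bypass propagation entirely and instead produce the infinite family directly via Lemma \ref{weak lambda} applied to build arbitrarily large $n$ with a fixed odd square factor $p^2$ satisfying $p = m+1$ exactly (so $\lambda$-practical but the corresponding $\varphi$-practical inequality $p \le m+1$ is tight in a way that still leaves a degree gap), checking the $\varphi$-practical failure by a short direct divisor-sum count for that structured shape.
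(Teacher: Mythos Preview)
Your main line of attack --- pick a seed $n_0$ that is $\lambda$-practical but not $\varphi$-practical, append large coprime primes using Lemma~\ref{weak lambda}, and propagate the non-$\varphi$-practicality by choosing the obstructing degree $m$ small enough that every new divisor $d$ with a new prime factor has $\varphi(d)\ge p-1>m$ --- is exactly the paper's argument. Your tentative seed ``$3^2\cdot 5\cdot\ldots$'' is in fact the right one: the paper takes $n_0=45$, checks that $x^{45}-1$ has no divisor of degree $22$ in $\Z[x]$ (so $m=22$ is the missing degree), verifies directly that $45$ is $\lambda$-practical, and then sets $n=45\cdot\prod_{23<p\le X}p$. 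Bertrand's postulate makes each successive prime satisfy the hypothesis of Lemma~\ref{weak lambda}, and since every appended prime exceeds $23$, any divisor $d\mid n$ with $d\nmid 45$ has $\varphi(d)\ge 28>22$, so the degree $22$ remains unattainable. So once you commit to $n_0=45$ and actually carry out the two short verifications (that $45$ is $\lambda$-practical and that degree $22$ is missing), you have the paper's proof.

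Two comments on the rest of your proposal. First, your opening paragraph about the discrepancy between $p\le m+2$ and $p\le m+1$ is a red herring here: that discrepancy explains why $9$ is weakly $\varphi$-practical yet not $\lambda$-practical, but it does not by itself produce examples going the \emph{other} direction ($\lambda$-practical, not $\varphi$-practical). Second, your fallback is not viable as stated: if $p=m+1$ with $m$ $\varphi$-practical and $(p,m)=1$, then Lemma~\ref{keylemmaphi} says $mp^k$ \emph{is} $\varphi$-practical for all $k\ge 2$, so tightness of that inequality does not create a gap. Drop the fallback and execute the seed-plus-propagation argument; it is complete once you pin down $n_0=45$ and $m=22$.
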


\begin{proof} Let $X \geq 1$ be a real number. Let $n = 45 \cdot \prod_{23 < p \leq X} p.$ It follows from Bertrand's Postulate that every prime $p \mid n$ with $p \nmid 45$ satisfies $p \leq m + 2$, where $m$ is the product of $45$ and all of the primes $23 < q < p.$ Then, since $45$ is $\lambda$-practical, it follows from Lemma \ref{weak lambda} that $n$ is $\lambda$-practical. However, $n$ is not $\varphi$-practical, since $x^{45} - 1$ has no divisor of degree $22$ and all other primes $p \mid n$ are greater than $23$, so $\lambda(p) > 22$. Thus, as we let $X$ tend to infinity, we see that this method produces an infinite family of $\lambda$-practical numbers that are not $\varphi$-practical. \end{proof}

We will elaborate on the ideas presented in the proof of Proposition \ref{infl} in the next section when we determine the asymptotic density of $\lambda$-practical numbers that fail to be $\varphi$-practical. 

%%%Density considerations
\section{Proof of Theorem \ref{philambdatheorem}}\label{lambdaphisection}

In this section, we will discuss the distribution of $\lambda$-practical numbers in relation to that of the $\varphi$-practical numbers. We begin by reminding the reader of the method of proof in \eqref{lt}, which will be a model for some of the arguments that we will use to bound the number of $\lambda$-practical integers up to $X$. The key to proving the upper bound in \eqref{lt} was to use Proposition \ref{even} in order to show that all even $\varphi$-practical numbers are practical. To handle the case of odd $\varphi$-practicals, we observed that, for every odd integer $n$ in $(0, X]$, there exists a unique positive integer $l$ such that $2^ln$ is in the interval $(X, 2X]$. Moreover, we showed that $2^ln$ is $\varphi$-practical if $n$ is $\varphi$-practical. As a result, we were able to construct a one-to-one map from the set of odd $\varphi$-practical numbers in $(1, X]$ to a subset of the even $\varphi$-practical numbers in $(X, 2X]$.This allowed us to directly compare the size of the set of $\varphi$-practical numbers with the size of the set of practical numbers, which we knew to be $O(X/\log X)$ from \cite[Theorem 2]{saias}.

We can use the same argument to show that the upper bound given in \eqref{lt} will also serve as an upper bound for the number of $\lambda$-practical numbers up to $X$. The only modification needed is to use Proposition \ref{even} to show that all even $\lambda$-practical numbers are practical. On the other hand, Lemma \ref{lnecessary} shows that, if $n$ is an odd $\lambda$-practical number, then it is weakly $\varphi$-practical. Thus, for $l \geq 1$, $2^ln$ is weakly $\varphi$-practical, since multiplying a weakly $\varphi$-practical integer $n$ by a power of $2$ will not prevent its prime divisors from satisfying the inequalities from Lemma \ref{necessary}. Therefore, we can use the argument given above for the odd $\varphi$-practicals to obtain the same upper bound for the number of $\lambda$-practicals up to $X$. 

In order to obtain a lower bound, we simply observe that the set of $\varphi$-practical numbers is properly contained within the set of $\lambda$-practical numbers. Hence, the lower bound that we gave in \cite{thompson} for the $\varphi$-practical numbers will also serve as a lower bound for the $\lambda$-practical numbers. As a result, we have:

\begin{proposition} Let $F_\lambda(X) = \# \{n \leq X : n$ is $\lambda$-practical$\}$. Then, there exist positive constants $c_3$ and $c_4$ such that $$c_3 \frac{X}{\log X} \leq F_\lambda(X) \leq c_4 \frac{X}{\log X},$$ for all $X \geq 2$.\end{proposition}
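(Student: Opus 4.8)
The plan is to prove the two inequalities separately, in each case reducing to facts already available: the estimate \eqref{lt} for the $\varphi$-practical counting function $F(X)$, and Saias' bounds \eqref{saias} for the practical numbers. For the lower bound I would simply observe that every $\varphi$-practical number is $\lambda$-practical: if $x^n-1$ has a divisor of every degree in $\Z[x]$, reducing such divisors modulo an arbitrary prime $p$ exhibits divisors of every degree in $\F_p[x]$, so $n$ is $p$-practical for all $p$. Hence $F_\lambda(X) \geq F(X) \geq c_1 X/\log X$ by \eqref{lt}, and we may take $c_3 = c_1$.

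For the upper bound I would split $F_\lambda(X)$ according to the parity of $n$. For the even $n$, Proposition \ref{even} says an even integer is $\lambda$-practical if and only if it is $\varphi$-practical, and in \cite{thompson} every even $\varphi$-practical number is shown to be practical; so the even $\lambda$-practical integers in $(0,X]$ number at most $PR(X) \ll X/\log X$ by \eqref{saias}. For the odd $n$, I would use the dyadic trick from \cite{thompson}: each odd $n \in (0,X]$ has a unique $l \geq 1$ with $2^l n \in (X, 2X]$, and $n \mapsto 2^l n$ is injective since $n$ is the odd part of $2^l n$. If $n$ is odd and $\lambda$-practical then by Lemma \ref{lnecessary} it is weakly $\varphi$-practical, and multiplying by $2^l$ preserves this, so $2^l n$ is an even weakly $\varphi$-practical integer; by Proposition \ref{even} it is then $\varphi$-practical, hence practical. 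Therefore the odd $\lambda$-practicals in $(0,X]$ number at most the practical numbers in $(X, 2X]$, again $\ll X/\log X$ by \eqref{saias}. Adding the two contributions and enlarging the implied constant to cover small $X$ yields the desired $c_4$ valid for all $X \geq 2$.

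The argument is essentially bookkeeping once the structural results of the earlier sections are granted, so I do not expect a serious obstacle; the one point deserving an explicit word is the assertion that $n$ weakly $\varphi$-practical forces $2^l n$ weakly $\varphi$-practical. Writing $n = p_1^{e_1}\cdots p_k^{e_k}$ with odd primes $p_1 < \cdots < p_k$, one checks that in $2^l p_1^{e_1}\cdots p_k^{e_k}$ the prime $2$ is now the smallest prime and satisfies $2 \leq 1 + 2$ trivially, while for the remaining primes the inequalities $p_{i+1} \leq m_i + 2$ of Lemma \ref{necessary} only become easier, since each partial product $m_i$ has been multiplied by $2^l$. With that verified, the upper bound follows from Proposition \ref{even}, Lemma \ref{lnecessary}, and \eqref{saias}, while the lower bound follows from the inclusion of the $\varphi$-practicals in the $\lambda$-practicals together with \eqref{lt}.
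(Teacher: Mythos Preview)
Your proposal is correct and follows essentially the same approach as the paper: the lower bound comes from the containment of the $\varphi$-practicals in the $\lambda$-practicals together with \eqref{lt}, and the upper bound comes from Proposition \ref{even} for even $n$ and, for odd $n$, the dyadic map $n\mapsto 2^{l}n$ combined with Lemma \ref{lnecessary} and the fact that multiplying a weakly $\varphi$-practical number by a power of $2$ keeps it weakly $\varphi$-practical. The paper's discussion is slightly terser but the logic is identical, and your explicit check that $2^{l}n$ remains weakly $\varphi$-practical is exactly the point the paper flags as the only thing to verify.
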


Note that the argument above shows that we may, in fact, take $c_3 = c_1$ and $c_4 = c_2$ (where $c_1$ and $c_2$ are the constants from \eqref{lt}). However, this does not imply that $F_\lambda(X) - F(X) = o(\frac{X}{\log X}).$ In fact, as stated in Theorem \ref{philambdatheorem}, we can show that $F_\lambda(X) - F(X) \gg \frac{X}{\log X}.$ Before we prove this result, we remind the reader of some definitions and lemmas used in the lower bound argument for the $\varphi$-practical numbers in \cite{thompson}, which will be useful in this scenario as well.

Let $1 = d_1(n) < d_2(n) < \cdots  < d_{\tau(n)}(n) = n$ denote the increasing sequence of divisors of a positive integer $n.$ We define $$T(n) = \mathrm{max}_{1 \leq i < \tau(n)} \frac{d_{i+1}(n)}{d_i(n)}.$$ \begin{definition} An integer $n$ is called $2$-\textit{dense} if $n$ is squarefree and $T(n) \leq 2.$\end{definition} Note that any $2$-dense number $n > 1$ is even. Let $$D(X) = \#\{1 \leq n \leq X : n \ \mathrm{is} \ 2\hbox{-dense} \}.$$ 
\noindent In \cite{saias}, Saias built on techniques developed by Tenenbaum (cf. \cite{tenenbaum0}, \cite{tenenbaum}) in order to prove the following upper and lower bounds for $D(X)$:

\begin{lemma}[Saias]\label{SaiasD} There exist positive constants $\kappa_1$ and $\kappa_2$ such that $$\kappa_1 \frac{X}{\log X} \leq D(X) \leq \kappa_2 \frac{X}{\log X}$$ for all $X \geq 2$. \end{lemma}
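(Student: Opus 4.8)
The plan is to establish the two inequalities by rather different means: the upper bound is elementary once one notices a containment, while the lower bound is the genuinely hard half and requires the Tenenbaum--Saias machinery.

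For the upper bound I would first show that every $2$-dense integer is practical. If $1 = d_1 < d_2 < \cdots < d_{\tau(n)} = n$ are the divisors of a $2$-dense number $n$, then $d_{i+1} \le 2 d_i$ for each $i$; since $d_1 = 1$, iterating gives $d_i \le 2^{i-1}$, and hence
\[
  \sum_{j=1}^{i} d_j \;\ge\; d_i \sum_{k=0}^{i-1} 2^{-k}
  \;=\; 2 d_i - d_i 2^{\,1-i} \;\ge\; 2 d_i - 1 .
\]
Therefore $d_{i+1} \le 2 d_i \le 1 + \sum_{j=1}^{i} d_j$ for every $i < \tau(n)$, which is the classical necessary-and-sufficient criterion for $n$ to be practical (the additive form of Stewart's condition; cf.\ \cite{stewart} and Lemma~\ref{stewartkey}). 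Thus $D(X) \le PR(X)$, and the upper bound in \eqref{saias} gives $D(X) \le \kappa_2 \frac{X}{\log X}$ with $\kappa_2 = C_2$. Note that the lower bound cannot be obtained this cheaply, since the $2$-dense numbers form a proper subset of the practical numbers.

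The lower bound rests on an adjunction lemma in the spirit of Lemmas~\ref{keylemmaphi} and~\ref{weak lambda}: if $m$ is $2$-dense (so squarefree, and even once $m > 1$) and $p$ is a prime with $P(m) < p \le 2m$, then $mp$ is again $2$-dense. To prove this one writes the divisors of $mp$ as the disjoint union of the block $A = \{d : d \mid m\}$ and the block $B = \{pd : d \mid m\}$; inside each block consecutive ratios are at most $2$ because $m$ is $2$-dense and $B$ is merely a dilate of $A$, so every \emph{new} pair of consecutive divisors of $mp$ occurs at an interface between the two blocks, and such interface gaps are bounded using $p \le 2m$ together with the fact that $2$-density of $m$ forces a divisor of $m$ in every dyadic interval $[t,2t] \subseteq [1,m]$. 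In particular, starting from $n = 2$ and repeatedly appending primes $q$ with $q \le 2 \times (\text{running product})$ always produces $2$-dense numbers, so every primorial is $2$-dense by Bertrand's Postulate.

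To upgrade this into $D(X) \gg \frac{X}{\log X}$, I would follow the iterative method of Tenenbaum \cite{tenenbaum0, tenenbaum} as carried out by Saias in \cite{saias}: parametrize a $2$-dense $n$ by its largest prime factor, writing $n = mp$ with $m$ $2$-dense, $P(m) < p$, and $p \le \min(2m, X/m)$, and set up a recursion for $D$ in terms of its values at smaller arguments, weighted by counts of primes over the admissible ranges. The main obstacle is that the crude form of this recursion is lossy: a one-step construction (a fixed primorial of size about $\sqrt{X}$ times a prime) yields only $D(X) \gg X^{1/2}/\log X$, and naive iteration does not reach the correct power of $\log X$. Recovering the sharp exponent requires running the recursion over a judiciously chosen band of splitting points $m$ and feeding in Tenenbaum's estimates for the number of integers up to $x$ having a divisor in a prescribed dyadic interval; this is the technical heart of the proof, and with those inputs the bound $D(X) \ge \kappa_1 \frac{X}{\log X}$ follows.
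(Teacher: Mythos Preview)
The paper does not prove Lemma~\ref{SaiasD} at all: it is quoted as a result of Saias \cite{saias}, with the remark that Saias ``built on techniques developed by Tenenbaum.'' So there is no in-paper proof to compare your proposal against.

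That said, your sketch is in the right spirit and worth a brief comment. Your upper-bound argument is clean and correct: the inequality $d_{i+1}\le 1+\sum_{j\le i}d_j$ is indeed the standard additive criterion for practicality, and your chain of inequalities establishing it from $T(n)\le 2$ is fine. Note, however, that you are invoking $PR(X)\le C_2\,X/\log X$ from \eqref{saias}, which in this paper is \emph{also} merely quoted from Saias; so within the logical structure of the present paper you have reduced one black box to another rather than eliminated it. For the lower bound you correctly identify that the Tenenbaum--Saias recursion is the essential ingredient and that nothing short of it will reach the exponent $1$ on $\log X$. Your adjunction lemma (if $m$ is $2$-dense and $P(m)<p\le 2m$ then $mp$ is $2$-dense) is true and is exactly the building block Saias uses; your interface-gap justification is a bit telegraphic but the idea is right. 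What you have written is an accurate roadmap of Saias's proof rather than a proof, which is appropriate given that the paper itself treats the lemma as an import.
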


The lower bound for the $2$-dense integers is also a lower bound for the count of practical numbers up to $X$, since every $2$-dense integer automatically satisfies Stewart's condition. Unfortunately, we cannot extrapolate any information about a lower bound for $F(X)$ from the lower bound for $D(X)$ because the $2$-dense integers are not necessarily $\varphi$-practical (for example, $n = 66$). In \cite{thompson}, we got around this problem by adopting the following modification on the definition of $2$-dense:

\begin{definition}\label{std} A $2$-dense number $n$ is \textit{strictly $2$-dense} if $\frac{d_{i+1}}{d_i} < 2$ holds for all $i$ satisfying $1 < i < \tau(n) - 1.$\end{definition} 
\noindent We showed (cf. Lemma 5.4 in \cite{thompson}) that the strictly $2$-dense integers have an important relationship with the $\varphi$-practical numbers: 

\begin{lemma}\label{modified} Every strictly $2$-dense number is $\varphi$-practical. \end{lemma}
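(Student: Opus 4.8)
The plan is to start from Lemma~\ref{phicond}: since $n$ is squarefree, $\sum_{d\mid n}\varphi(d)=n$, so $n$ is $\varphi$-practical exactly when the subset sums of the multiset $\{\varphi(d):d\mid n\}$ cover all of $[0,n]$. (The cases $n\le 2$ are trivial, so assume $n\ge 3$.) I would verify this by the usual greedy argument, but with a crucial choice of order: list the divisors as $\delta_1,\dots,\delta_{\tau(n)}$ with $\varphi(\delta_1)\le\varphi(\delta_2)\le\cdots$, and set $\Sigma_j=\sum_{i\le j}\varphi(\delta_i)$. Adjoining $\varphi(\delta_{j+1})$ enlarges the covered interval from $[0,\Sigma_j]$ to $[0,\Sigma_{j+1}]$ as soon as $\varphi(\delta_{j+1})\le\Sigma_j+1$, so the subset sums are exactly $\{0,1,\dots,n\}$ provided $\varphi(\delta_1)=1$ (true, $\varphi(1)=1$) and $\varphi(\delta_{j+1})\le\Sigma_j+1$ for every $j$. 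One must order by the \emph{value} of $\varphi$, not by the size of the divisor: e.g. $n=870=2\cdot3\cdot5\cdot29$ is strictly $2$-dense, yet $\varphi(29)=28>1+\sum_{d\mid 870,\ d<29}\varphi(d)=23$, so a ``divisors in increasing order'' version of the criterion breaks down (it is rescued only because, in the right order, the small value $\varphi(30)=8$ is used before $\varphi(29)=28$). Absorbing repeated values, the criterion is equivalent to the statement, which I will call $(\dagger)$, that
\[
\varphi(d_0)\ \le\ 1+\sum_{\substack{d\mid n\\ \varphi(d)<\varphi(d_0)}}\varphi(d)
\qquad\text{for every divisor }d_0\text{ of }n.
\]

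For $(\dagger)$ I would exploit two cheap supplies of divisors of $n$ having small $\varphi$. Let $e_0$ be the odd part of $d_0$, so $\varphi(d_0)=\varphi(e_0)$. Every proper divisor $d$ of $e_0$ is odd and satisfies $\varphi(d)<\varphi(e_0)$, and since $n$ is even (every strictly $2$-dense number $>2$ is divisible by $6$) the integer $2d$ also divides $n$, with $\varphi(2d)=\varphi(d)<\varphi(e_0)$. Hence the right side of $(\dagger)$ is at least $2\sum_{d\mid e_0,\ d<e_0}\varphi(d)=2(e_0-\varphi(e_0))$, which settles $(\dagger)$ whenever $3\varphi(e_0)\le 2e_0+1$ --- in particular whenever $e_0=1$ or $3\mid e_0$. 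The remaining case is $q:=P^-(e_0)\ge 5$. Write $k_0=e_0/q$, so $\varphi(e_0)=(q-1)\varphi(k_0)$ and, $n$ being squarefree, every prime factor of $k_0$ exceeds $q$; put $N=\prod_{p\mid n,\ p<q}p$, a divisor of $n$. For each divisor $c$ of $N$ with $\varphi(c)<q-1$, the primes of $c$ all lie below $q$, hence are coprime to $k_0$, so $ck_0\mid n$ and $\varphi(ck_0)=\varphi(c)\varphi(k_0)<(q-1)\varphi(k_0)=\varphi(d_0)$; as these products are pairwise distinct, $(\dagger)$ follows once we know the purely multiplicative inequality $(\ddagger)$:
\[
\sum_{\substack{c\mid N\\ \varphi(c)<q-1}}\varphi(c)\ \ge\ q-1.
\]

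The proof of $(\ddagger)$ is the technical core, and the step I expect to be the main obstacle. The crude bound is $\sum_{c\mid N,\ \varphi(c)<q-1}\varphi(c)\ge D$, where $D$ is the largest divisor of $n$ below $q$ (strictness forces $D>q/2$), but this loses the fatal factor of $2$, and merely rearranging the divisors below $q$ cannot help: $(\ddagger)$ genuinely fails for numbers that are $2$-dense but not strictly so, e.g. $n=66=2\cdot3\cdot11$ at $q=11$, which is precisely why $66$ fails to be $\varphi$-practical. What the strict condition buys is that a ratio-$2$ jump between consecutive divisors at an interior position is forbidden (Definition~\ref{std}); since the jumps from $3$ to $6$, from $15$ to $30$, and so on, all have ratio exactly $2$, each must be broken by the presence of a further small prime ($5$, then $7$, and so forth) dividing $n$. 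Propagating this upward through the primes of $n$ --- an induction on $q$ --- shows that the divisors of $N$ below $q$ form a chain $1,2,\dots$ whose interior ratios are all strictly below $2$ and which reaches past $q/2$; feeding that structure into the same parity-and-doubling bookkeeping used for $(\dagger)$, now applied to $N$ in place of $n$, yields $(\ddagger)$ and with it the theorem.
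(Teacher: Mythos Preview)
The paper does not prove this lemma; it is quoted from \cite{thompson} (Lemma~5.4 there), so there is no in-paper argument to compare against line by line. From the surrounding material one can infer that the route taken in \cite{thompson} is different from yours: it passes through the structural Lemma~\ref{keylemmaphi} and its squarefree corollary (a squarefree integer is $\varphi$-practical iff it is weakly $\varphi$-practical), so that the task reduces to checking that strict $2$-density forces the chain of inequalities $p_{i+1}\le m_i+2$. Your approach --- ordering the divisors by the \emph{value} of $\varphi$ and running the greedy subset-sum criterion directly --- is a genuinely different and attractive idea, and the example $n=870$ nicely isolates why ordering by divisor size is wrong. The reduction to $(\dagger)$ and then to $(\ddagger)$ is carried out correctly.

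But the last paragraph is not a proof of $(\ddagger)$; it is a plan, and the plan has a hole. The ``parity-and-doubling bookkeeping'' you used for $(\dagger)$ worked because it exploited the identity $\sum_{d\mid e_0}\varphi(d)=e_0$ over the \emph{entire} divisor set of $e_0$. For $(\ddagger)$ you only control the divisors of $N$ that lie below $q$, and you cannot simply sum $\varphi$ over all of $N$: already at $q=7$ one has $N=30$, and the divisors $15,30$ have $\varphi=8\ge q-1$, so they must be excluded. Your pairing $(e,2e)$ for odd $e\mid N$ with $e<q$ does land inside the admissible set, but you have not shown that the resulting sum reaches $q-1$; the crude bound via the largest divisor $D<q$ (which you yourself note loses a factor of $2$) is all that the chain structure below $q$ gives without further work. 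The ``induction on $q$'' is never set up --- what is the inductive hypothesis, and how does adjoining the next prime of $n$ to $N$ interact with the moving threshold $q-1$? --- and the parenthetical ``$5$, then $7$, and so forth'' is already inaccurate: strict $2$-density at the $15$--$30$ gap can be broken by $21,22,23,26,29,\dots$, not only by the presence of $7$. Until $(\ddagger)$ is actually established, the argument is incomplete.
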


Recall that, in Proposition \ref{infl}, we constructed an infinite family of $\lambda$-practical numbers that are not $\varphi$-practical. We will use this construction, along with several lemmas concerning the $2$-dense and strictly $2$-dense numbers, in order to prove the first of our main theorems.

\begin{proof}[Proof of Theorem \ref{philambdatheorem}] We begin by recalling an argument given in \cite{thompson}. Let $n = mpj,$ where $m$ is a $2$-dense integer, $p$ is a prime satisfying $m < p < 2m$, and $j$ is an integer that has the following properties: $j \leq X/mp$, $P^-(j) > p$ and $mpj$ is $2$-dense. Let $C > 16$ be an integer that is chosen to be large relative to the size of the constant $\kappa_1$ from Lemma \ref{SaiasD}. For each integer $k > C$, we consider those $2$-dense numbers $m \in (2^{k-1}, 2^k]$. Since $m < p < 2m$, we must have $p \in (2^{k-1}, 2^{k+1}).$ We say that $n$ has an \textit{obstruction at $k$} if $m$ and $p$ land within these intervals, i.e., if $p$ is a prime in our construction that might prevent $n$ from being strictly $2$-dense. In Theorem 5.11 in \cite{thompson}, we showed that, if $C$ is large enough, the number of $2$-dense integers with obstructions at $k > C$ is negligible relative to the full count of $2$-dense integers. Thus, consider the set $$\mathcal{N} = \{n \leq X : n \ \hbox{is $2$-dense with no obstructions at} \ k > C\}.$$ For an appropriate choice of $C \geq 5$, we have $$\# \mathcal{N} \geq \kappa \frac{X}{\log X},$$ where $\kappa > 0$ is some absolute constant. As in \cite{thompson}, we define a function $f : \mathcal{N} \rightarrow \Z_+$ to be a function that maps each element $n \in \mathcal{N}$ to its largest $2$-dense divisor with all prime factors less than or equal to $2^C$. Let $\mathcal{M} = \mathrm{Im} f.$ The Pigeonhole Principle guarantees that there is some $m_0 \in \mathcal{M}$ that has at least $$\frac{\# \mathcal{N}}{\# \mathcal{M}} \geq \frac{\kappa}{4^{2^C}} \frac{X}{\log X}$$ elements in its preimage, since the Chebyshev bound (cf. \cite[pg. 108]{pollack}) implies that $\prod_{p \leq 2^C} p \leq 4^{2^C}.$ In other words, $m_0 = f(n)$ for at least the average number of integers in a pre-image. 

Now, for each $n \in \mathcal{N}$ with $f^{-1}(m_0) = n$, let \begin{equation}\label{n'lambda} n' = \frac{3 \cdot 5 \cdot 29^{5}}{\gcd(2 \cdot 5 \cdot 7 \cdot 11 \cdot 13 \cdot 17 \cdot 19 \cdot 23, m_0)} \ n \prod_{\substack{29  < p \leq 2^C \\ p \ \mathrm{prime} \\ p \nmid m_0}} p.\end{equation} Since $n$ is $2$-dense, it must be the case that $3 \mid n$, hence $3^2 \| n'.$ Also, we must have $5 \| n'$, since if $5 \mid n$ then $5 \mid m_0$, so it is removed in the denominator of $n'$. Thus, the only $5$ that appears in the factorization of $n'$ is the one that appears in the numerator of \eqref{n'lambda}. Now, $n'$ does not have any other prime factors smaller than $29$, since if $n$ is divisible by a prime $q < 29$, then $q \mid m_0$, hence $q \mid \gcd(2 \cdot 5 \cdot 7 \cdot 11 \cdot 13 \cdot 17 \cdot 19 \cdot 23, m_0).$ Thus, $n'$ is not $\varphi$-practical, since the absence of small primes (aside from the divisors of $45$) makes it so that $x^n-1$ has no divisor of degree $22$. However, we will show that $n'$ is $\lambda$-practical. Let $$l = n \prod_{\substack{29 < p \leq 2^C \\ p \ \mathrm{prime} \\ p \nmid m_0}} p.$$ Since $n$ is $2$-dense then $2 \mid n.$ Moreover, if we enumerate the prime factors of $l$ in increasing order, where $p_i$ is the $i^{th}$ smallest, then Bertrand's postulate implies that all of the primes $p_i$ dividing $l$ that are greater than $29$ satisfy $p_{i+1} \leq 2 p_{i}$. Thus, they satisfy the inequality given in Lemma \ref{necessary} as well. Let $$r = \frac{3 \cdot 5 \cdot 29^{5}}{\gcd(2 \cdot 5 \cdot 7 \cdot 11 \cdot 13 \cdot 17 \cdot 19 \cdot 23, m_0)},$$ so $n' = l \cdot r.$ Multiplying $l$ by $r$ does not prevent the primes greater than or equal to $29$ from satisfying the inequality from Lemma \ref{necessary}, since $29^5 > \frac{1}{3} \cdot 7 \cdot 11\cdot 13 \cdot 17 \cdot 19 \cdot 23,$ so that $r > 1.$ In other words, if a prime factor $p$ of $n'$ satisfies $p \leq m + 2$, then it is certainly the case that $p \leq mr + 2.$ Thus, we have just shown that $n'$ has the following structure: $n' = 45M$, where $P^-(M) = 29$ and all of the prime factors of $45M$ satisfy the inequality from Lemma \ref{necessary}. Since $45$ is $\lambda$-practical, then Lemma \ref{weak lambda} implies that $n'$ is $\lambda$-practical. Now, since we multiplied every $n$ in the pre-image of $m_0$ by the same number, there is a one-to-one correspondence between $\lambda$-practical numbers up to $r \cdot 4^{2^C} X$ that we have constructed and the $2$-dense numbers in the pre-image of $m_0$. As a result, at least $\frac{\kappa}{4^{2^C}} \frac{X}{\log X}$ of the integers up to $r \cdot 4^{2^C} X$ are $\lambda$-practical.\end{proof}

Note: By Lemma \ref{lnecessary}, all of the $\lambda$-practical numbers that we have constructed in the proof of Theorem \ref{philambdatheorem} are weakly $\varphi$-practical. As a result, this argument also shows that, for $X$ sufficiently large, there are $\gg \frac{X}{\log X}$ weakly $\varphi$-practicals in $[1,X]$ that are not $\varphi$-practical. 

%%%The relationship between lambda-practical and p-practical numbers
\section{Proof of Theorem \ref{plambdatheorem}}

Our next endeavor will be to quantify the $p$-practical numbers that fail to be $\lambda$-practical. We begin with the following analogue of Lemma \ref{lnecessary}, which is proven in the same manner as its predecessor.

\begin{lemma}\label{pnecessary} Let $n = mq$, where $m$ is $p$-practical and $q$ is a prime satisfying $\ell_p^*(q) \leq m + 1$, with $(q, m) = 1.$ Then $n$ is $p$-practical. Moreover, if $n = m q^k$ where $k \geq 2$, then $n$ is $p$-practical if $\ell_p^*(q) \leq m.$ \end{lemma}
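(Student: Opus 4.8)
The plan is to mimic the proof sketch of Lemma~\ref{weak lambda}, but working with the characterization of $p$-practical numbers in Lemma~\ref{pcondsum}. Write the divisors of $n=mq$ as the \emph{$m$-part} $\{d:d\mid m\}$ together with the \emph{$q$-part} $\{qd:d\mid m\}$; since $(q,m)=1$ we have $\ell_p^*(qd)=\mathrm{lcm}[\ell_p^*(q),\ell_p^*(d)]$ and $\varphi(qd)=(q-1)\varphi(d)$, and $t:=\ell_p^*(q)$ divides $q-1$. By Lemma~\ref{pcondsum} the $m$-part already represents every integer in $[0,m]$ as a sum $\sum_{d\mid m}\ell_p^*(d)n_d$ with $0\le n_d\le\varphi(d)/\ell_p^*(d)$. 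The crux will be to show that the $q$-part represents \emph{every} multiple of $t$ in $[0,(q-1)m]$. Granting this, any $l\in[1,n]$ splits as $l=B+R$ with $B$ a multiple of $t$ in $[0,(q-1)m]$ and $R\in[0,m]$: the relevant multiples of $t$ are spaced $t\le m+1$ apart and run up to $(q-1)m$, and $(q-1)m+m=n$, so such a $B$ exists. Expanding $B$ as a $q$-part sum and $R$ as an $m$-part sum and concatenating yields $l=\sum_{e\mid n}\ell_p^*(e)n_e$ of the required shape, so $n$ is $p$-practical.

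To prove the crux I would divide every $q$-part term by $t$: the divisor $qd$ then contributes $\varphi(qd)/\ell_p^*(qd)$ ``coins'' of value $\ell_p^*(qd)/t=\ell_p^*(d)/\gcd(t,\ell_p^*(d))$. A direct count of multiplicities and values shows that this coin system is exactly what one obtains from $\tfrac{q-1}{t}$ disjoint copies of the $m$-part coin system (legitimate since $t\mid q-1$) after splitting, for each $d$, every value-$\ell_p^*(d)$ coin into $\gcd(t,\ell_p^*(d))$ coins of value $\ell_p^*(d)/\gcd(t,\ell_p^*(d))$. Since $m$ is $p$-practical, $\tfrac{q-1}{t}$ copies of its coin system represent $[0,\tfrac{q-1}{t}m]$; and splitting a coin into equal-total divisor-coins preserves the property of representing every value up to the total sum (a represented value either did not use the original coin, in which case nothing changes, or used it in full, in which case using all the split pieces reproduces it). Hence the $q$-part$/t$ system represents $[0,\tfrac{q-1}{t}m]$, i.e.\ the $q$-part represents all multiples of $t$ up to $(q-1)m$, as needed.

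For the prime-power case $n=mq^{k}$ with $k\ge2$ and $\ell_p^*(q)\le m$, I would stratify the divisors by $q$-adic valuation into layers $L_j=\{dq^{j}:d\mid m\}$ and prove by induction on $j$ that the coins from $L_0,\dots,L_j$ represent $[0,mq^{j}]$; the base case $j=0$ is that $m$ is $p$-practical, and $L_0\cup\cdots\cup L_k$ is all of the divisors of $n$. For the inductive step one repeats the argument above with $q^{j+1}$ and $t_{j+1}:=\ell_p^*(q^{j+1})$ in place of $q$ and $t$: since $t_{j+1}\mid\varphi(q^{j+1})=q^{j}(q-1)$, layer $L_{j+1}$ divided by $t_{j+1}$ is, up to splitting, $\tfrac{q^{j}(q-1)}{t_{j+1}}$ copies of the $m$-part system and so represents all multiples of $t_{j+1}$ in $[0,q^{j}(q-1)m]$; the elementary fact that $\ell_p^*(q^{i+1})/\ell_p^*(q^{i})\in\{1,q\}$ gives $t_{j+1}\le q^{j}\ell_p^*(q)\le q^{j}m$, so these multiples are closely enough spaced to bridge the interval $[0,mq^{j}]$ supplied by the induction hypothesis, and $q^{j}(q-1)m+q^{j}m=q^{j+1}m$.

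The main obstacle is the crux claim, and it is exactly where this lemma departs from its $\lambda$-practical counterpart, Lemma~\ref{weak lambda}. There one has $\lambda(q)=q-1$, so the $q$-part is reached simply by scaling the $m$-part representation by $q-1$ and the interval-covering forces the hypothesis $q\le m+2$; here $\ell_p^*(q)$ can be a proper divisor of $q-1$, so the $q$-part carries genuinely more capacity (total $(q-1)m$ rather than $\ell_p^*(q)\cdot m$) that has to be fully used, and bookkeeping the interaction of $\ell_p^*(q)$ with the various $\ell_p^*(d)$ through the $\mathrm{lcm}$/$\gcd$ relations and the coin-splitting device is the one new ingredient. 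The prime-power case adds only the routine complication of iterating this through the tower of layers together with the standard bound on how fast $\ell_p^*(q^{j})$ can grow.
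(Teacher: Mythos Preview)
Your proposal is correct and, at the structural level, follows the paper's own route: the paper gives no proof of this lemma beyond the remark that it is ``proven in the same manner as its predecessor,'' meaning one should transport the interval-covering argument sketched for Lemma~\ref{weak lambda} to the $p$-practical setting via the characterization in Lemma~\ref{pcondsum}.

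That said, you have done more than merely transcribe the earlier sketch, and the extra work is genuinely needed. In the $\lambda$-practical case one has $\lambda(q)=q-1$, so writing $l=(q-1)Q+R$ with $0\le Q,R\le m$ already reaches $(q-1)m+m=n$; the $q$-part is used at exactly its capacity. In the $p$-practical case $t=\ell_p^*(q)$ may be a proper divisor of $q-1$, so the naive decomposition $l=tQ+R$ with $Q,R\in[0,m]$ only reaches $(t+1)m<qm=n$. Your ``crux'' step---showing that the $q$-part actually represents \emph{every} multiple of $t$ up to its full total $(q-1)m$, by identifying the $q$-part$/t$ coin system with $\tfrac{q-1}{t}$ copies of the $m$-system after harmless coin-splitting---is precisely the missing ingredient, and your bookkeeping of the $\mathrm{lcm}/\gcd$ relations checks out. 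The same device, iterated through the layers $L_j$ with the standard bound $\ell_p^*(q^{j+1})/\ell_p^*(q^{j})\in\{1,q\}$, handles the prime-power case. So your argument is a faithful (and necessary) fleshing-out of what the paper leaves implicit; the paper's ``same manner'' is accurate in spirit but elides exactly the point you took care to justify.
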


We can use Lemma \ref{pnecessary} to prove our first result comparing the sizes of the sets of $\lambda$-practical and $p$-practical numbers. 

\begin{proposition}\label{pinfinite} For each prime $p$, there are infinitely many $p$-practicals that are not $\lambda$-practical. \end{proposition}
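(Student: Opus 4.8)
The plan is to exhibit, completely explicitly, an infinite family of $p$-practical numbers that violate the necessary inequality $p_{i+1}\le m_i+2$ of Lemma~\ref{necessary}; by Lemma~\ref{lnecessary} such numbers cannot be $\lambda$-practical, so this suffices. The tool driving the construction is Lemma~\ref{pnecessary}, which allows us to multiply a $p$-practical number by a prime $q$ provided $\ell_p^*(q)$ is not too large; the point is that $\ell_p^*(q)=\ell_p(q)$ can be as small as $3$ for a prime $q$ that is nonetheless $\ge 7$, i.e.\ large relative to the prime power $2$ that it will sit next to.

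First I would produce a single such prime. Choose any prime $q_0\mid p^{2}+p+1$ with $q_0\ne 3$; this exists since $9\nmid p^{2}+p+1$ (a one-line residue check, splitting on $p\bmod 3$) while $p^{2}+p+1\ge 7$, so $p^{2}+p+1$ is not a power of $3$. Since $q_0\mid p^{3}-1$ but $q_0\ne 3$, and any common prime factor of $p-1$ and $p^{2}+p+1$ divides $3$, we get $q_0\nmid p-1$; hence $\ell_p(q_0)$ is a divisor of $3$ different from $1$, i.e.\ $\ell_p(q_0)=3$, and therefore $3\mid q_0-1$, so $q_0\ge 7$. Since $2$ is $\varphi$-practical, hence $p$-practical, and $\ell_p^*(q_0)=3\le 2+1$ with $(q_0,2)=1$, Lemma~\ref{pnecessary} gives that $2q_0$ is $p$-practical; and in the factorization $2q_0$ the larger prime $q_0\ge 7$ exceeds $2+2$, so the inequality of Lemma~\ref{necessary} already fails for $2q_0$.

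To upgrade this single example to infinitely many, I would enlarge $2q_0$ while protecting the gap at $q_0$. For $y\ge q_0$ set
\[
  N_y \;=\; 2q_0\prod_{\substack{q_0<r\le y\\ r\ \mathrm{prime}}} r .
\]
Adjoining the primes $r\in(q_0,y]$ one at a time in increasing order, at the stage where $r$ is adjoined the number $M_r$ accumulated so far is divisible by $2$ and by the (odd) prime immediately preceding $r$, so $M_r>r$ by Bertrand's postulate; hence $\ell_p^*(r)\le r-1<M_r$ and $(r,M_r)=1$, and Lemma~\ref{pnecessary} applies at each stage (the degenerate possibility $r=p$ only helps, since then $\ell_p^*(r)=1$). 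Thus every $N_y$ is $p$-practical. Moreover the only prime power of $N_y$ that is less than $q_0$ is $2$, so the inequality of Lemma~\ref{necessary} fails at $q_0$ (as $q_0\ge 7>4$); hence $N_y$ is not weakly $\varphi$-practical, so not $\lambda$-practical by Lemma~\ref{lnecessary}. Letting $y\to\infty$ yields infinitely many $p$-practicals that are not $\lambda$-practical.

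The one step that genuinely requires an idea is the construction of the seed prime $q_0$ — producing, for an arbitrary $p$, a prime of small multiplicative order that is still $>4$; after that the argument is routine bookkeeping with Lemmas~\ref{pnecessary}, \ref{necessary}, and~\ref{lnecessary} together with Bertrand's postulate. (One may substitute Zsygmondy's theorem on $p^{3}-1$ for the elementary construction of $q_0$; alternatively the inflation step can be avoided by using, for each odd prime $\ell$, the number $n_\ell=2^{b}q_\ell$ with $q_\ell\ne\ell$ a prime factor of $(p^{\ell}-1)/(p-1)$ — whence $\ell_p(q_\ell)=\ell$ and, $q_\ell$ being odd, $q_\ell\ge 2\ell+1$ — paired with a power $2^{b}\in[\ell-1,2\ell-2]$.)
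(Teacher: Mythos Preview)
Your proof is correct and uses essentially the same core idea as the paper: for $p\ge 3$ the paper also takes a prime $q_0\ne 3$ dividing $p^{2}+p+1$, checks that $9\nmid p^{2}+p+1$ to guarantee such a $q_0$ exists, and concludes via Lemma~\ref{pnecessary} that $2q_0$ is $p$-practical but (since $\lambda(q_0)\ge 4$) not $\lambda$-practical.

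Two small points of comparison. First, the paper handles $p=2$ separately, taking the seed $21$ rather than $2q_0$; your observation that $p^{2}+p+1=7$ already gives a usable $q_0$ when $p=2$ lets you treat all primes uniformly, which is a genuine simplification. Second, for $p\ge 3$ the paper's written argument actually stops after producing the single example $2q_0$ and defers the ``infinitely many'' to a remark invoking $n=p^{k}$ via Lemma~\ref{binomial}; your Bertrand-based inflation $N_y=2q_0\prod_{q_0<r\le y}r$ (parallel to what the paper does only in its $p=2$ case) makes this step explicit and self-contained.
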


\begin{proof} 

\textit{Case 1:} If $p = 2$, let $n = 21 \cdot \prod_{7 < q \leq X} q.$ Since $21$ is $2$-practical and since each prime $q_0$ in the product over $q$ satisfies the inequality $q_0 \leq \prod_{7 < q < q_0} q + 2$, then $n$ is $2$-practical by Lemma \ref{pnecessary}. However, $n$ is not $\lambda$-practical, since we cannot write $4$ in the form described in Theorem \ref{lambda}. Thus, by letting $X$ tend to infinity, we see that this method will generate an infinite family of $2$-practical numbers that are not $\lambda$-practical. 

\textit{Case 2:} For each prime $p \geq 3$,we will show that there exists a prime $q_0 \neq 3$ dividing $(p^2 + p + 1)$ and, for this $q_0$, the number $2q_0$ is $p$-practical but not $\lambda$-practical. First, observe that if such a prime exists, then $q_0 \mid (p^2 + p + 1)$ implies that $p^3 \equiv 1 \pmod{q_0}$, i.e. $\ell_p^*(q_0) \leq 3.$ Thus, since $m_0 = 2$ is $p$-practical for all primes $p$ and $\ell_p^*(q_0) \leq m_0 + 1$, then $2q_0$ is $p$-practical by Lemma \ref{pnecessary}. However, $2q_0$ is not $\lambda$-practical, since $q_0 > 3$ implies that $\lambda(q_0) \geq 4$. Thus, we cannot write $3$ in the form described in Theorem \ref{lambda}.

Now, we will prove the existence of a prime $q_0$ satisfying the conditions given above. The argument boils down to proving that $p^2 + p + 1$ is not a power of $3$. In the case where $p = 3$, we have $p^2 + p + 1 = 13$. Suppose that $p > 3$. Then, it must be the case that $p \equiv \pm 1 \pmod{3}.$ If $p \equiv -1 \pmod{3}$, then $p^2 + p + 1 \equiv 1 \pmod{3}$, so $p^2 + p + 1$ is not divisible by $3.$ On the other hand, if $p \equiv 1 \pmod{3}$ then, if $p^2 + p + 1$ were a power of $3$, the fact that $p > 3$ forces $p^2 + p + 1$ to be divisible by $9$. However, the congruence $x^2 + x + 1 \equiv 0 \pmod{9}$ has no solutions.\end{proof}

The infinite families that we have just constructed will play an important role in the proof of Theorem \ref{plambdatheorem}, which we will present at the end of this section. We remark that we could also have proven the second case in Proposition \ref{pinfinite} using the following lemma:

\begin{lemma}\label{binomial} If $n = p^k$ with $k \geq 0$, then $n$ is $p$-practical. \end{lemma}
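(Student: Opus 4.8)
The plan is to exploit the fact that $x^{p^k}-1$ collapses completely in characteristic $p$. The first step is to recall the Frobenius identity: since $f \mapsto f^p$ is a ring endomorphism of $\F_p[x]$, we have $(x-1)^p = x^p - 1$ in $\F_p[x]$, and iterating $k$ times gives $x^{p^k}-1 = (x-1)^{p^k}$. (Equivalently, this follows from Proposition \ref{factorppractical}(2) applied to each cyclotomic factor $\Phi_{p^f}(x) = (x-1)^{\varphi(p^f)}$ of $x^{p^k}-1$.) The second step is then immediate: every monic divisor of $(x-1)^{p^k}$ in $\F_p[x]$ is of the form $(x-1)^j$ with $0 \le j \le p^k$, and this polynomial has degree exactly $j$. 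Hence $x^{p^k}-1$ possesses a divisor of every degree between $0$ and $p^k$, and in particular of every degree between $1$ and $p^k$, which is precisely the statement that $n = p^k$ is $p$-practical. The endpoint case $k = 0$ (i.e.\ $n = 1$) is trivial, and in any event is covered verbatim by this argument.

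A second route, which ties the lemma to the combinatorial machinery of Section 2, is to verify the criterion of Lemma \ref{pcondsum} directly. The divisors of $n = p^k$ are the $d = p^f$ with $0 \le f \le k$; since the largest divisor of $p^f$ coprime to $p$ is $1$, we get $\ell_p^*(p^f) = \ell_p(1) = 1$, so the criterion reduces to showing that every $m$ with $1 \le m \le p^k$ can be written as $m = \sum_{f=0}^{k} n_f$ with $0 \le n_f \le \varphi(p^f)$. Since $\sum_{f=0}^{j} \varphi(p^f) = p^j$ telescopes, a short induction on $j$ shows that every integer in $[0, p^j]$ is so representable; taking $j = k$ finishes the argument.

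There is essentially no obstacle here: once one has the identity $x^{p^k}-1 = (x-1)^{p^k}$, the lemma is a one-line deduction, and that identity is just the ``freshman's dream'' applied $k$ times. The only point requiring any care is the bookkeeping at the trivial endpoint $k = 0$, and even there the displayed factorization and the degree count go through without modification. I would therefore present the factorization argument as the main proof and relegate the Lemma \ref{pcondsum} computation to a remark, since the former is both shorter and more transparent.
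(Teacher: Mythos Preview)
Your proposal is correct and your primary argument is exactly the paper's own proof: the identity $x^{p^k}-1=(x-1)^{p^k}$ via the freshman's dream (i.e.\ the binomial theorem in characteristic $p$), followed by the observation that $(x-1)^j$ supplies a divisor of every degree $j$. Your second route via Lemma~\ref{pcondsum} is a valid alternative that the paper does not mention, but it adds nothing essential here.
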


\begin{proof} Let $n = p^k$, with $k \geq 0$. Using the binomial theorem, we have $x^{p^k} - 1 = (x-1)^{p^k}$ in $\F_p[x].$ Hence, $x^n-1$ has a divisor of every degree, so $n$ is $p$-practical.\end{proof}

In order to generate an infinite family of $p$-practical numbers when $p \geq 5$, we could simply have taken $n = p^k$, where $k$ ranges over all positive integers. By Lemma \ref{binomial}, $n$ is $p$-practical. However, when $p$ is in this range, we have $\lambda(p) = p - 1 \geq 4$. In other words, the gap between $1$ and $\lambda(p)$ is too large for $p^k$ to be $\lambda$-practical. In the case where $p = 3$, we can take $n = p^k$ with $k \geq 2.$ Then $4$ cannot be written in the form described in Theorem \ref{lambda}, so $n$ fails to be $\lambda$-practical. 

\begin{proof}[Proof of Theorem \ref{plambdatheorem}] This proof is nearly identical to the proof of Theorem \ref{philambdatheorem}. The main difference arises in our construction of $n'$, which varies depending on our choice of $p$. If $p = 2$, we let $$n' = \frac{7^2}{\gcd(10, m_0)} \ n \prod_{\substack{7 < q \leq 2^C \\ q \nmid m_0}} q,$$ where $C$, $m_0$ and $n$ are defined as in the proof of Theorem \ref{philambdatheorem}. Then $n'$ is of the form $n' = 21 \cdot m$, where $P^-(m) \geq 7$ and the primes dividing $m$ satisfy the weakly $\varphi$-practical conditions. As we showed in the proof of Lemma \ref{pinfinite}, $21$ is $2$-practical but not $\lambda$-practical. Thus, since all of the prime factors of $m$ are at least $7$, it follows from Lemma \ref{weak lambda} that $n'$ is not $\lambda$-practical. To show that $n'$ is $2$-practical, we use Lemma \ref{pnecessary} in place of Lemma \ref{lnecessary} in the proof of Theorem \ref{philambdatheorem}. 

The arguments for $p \geq 3$ follow the same line of reasoning. In the case where $p = 3$, we define $$ n' = \frac{13^4}{\gcd(3 \cdot 5 \cdot 7 \cdot 11 \cdot 13, m_0)} \ n \prod_{\substack{13 < q \leq 2^C \\ q \nmid m_0}} q.$$ Then $n'$ is of the form $n' = 26 \cdot 13^3 \cdot m$, where $P^-(m) > 13$ and all of its prime factors satisfy the weakly $\varphi$-practical conditions. Since $26$ is $3$-practical but not $\lambda$-practical, we can use the same reasoning from the $p = 2$ case to show that $n'$ is indeed $3$-practical but not $\lambda$-practical. In the case where $p \geq 5$, we let \begin{equation}\label{n'5practical} n' = \frac{2p^2}{\gcd(6p, m_0)} \ n \prod_{\substack{p < q \leq 2^C \\ q \nmid m_0}} q.\end{equation} Since $m_0$ is $2$-dense, it must be the case that $m_0$ is divisible by $6$ and by at least one of the primes $5$ and $7$. In other words, there are three cases that we need to consider: If $42 \mid m_0$, then \eqref{n'5practical} yields $n' = 14p^2m$. If $30 \mid m_0$ but $7 \nmid m_0$, we have $n' = 10p^2m$. If $210 \mid m_0$, then $n' = 70p^2m$. Either way, $n'$ is not $\lambda$-practical, since $10, 14$ and $70$ are not $\lambda$-practical and $P^{-}(m) > p\geq 5.$ As in the cases above, we can use the proof of Theorem \ref{philambdatheorem} with Lemma \ref{pnecessary} to show that $n'$ is $p$-practical.\end{proof}

%%%Acknowledgements
\textit{Acknowledgements.} This work was completed as part of my Ph.D. thesis \cite{thompsonthesis}. I would like to thank my adviser, Carl Pomerance, for suggesting many of the problems mentioned in this paper and for providing endless encouragement and help with editing. I would also like to thank Greg Martin for posing the question that is discussed in Proposition \ref{union}. 

%%%Bibliography


\begin{thebibliography}{10}

\bibitem{df}
D. Dummit, R. Foote, \textit{Abstract algebra}. John Wiley \& Sons, Inc., USA, 2004. 

\bibitem{pollack}
P. Pollack, \textit{Not always buried deep: a second course in elementary number theory}. Amer. Math. Soc., Providence, 2009.

\bibitem{saias}
E. Saias, \textit{Entiers \`{a} diviseurs denses. I.}, J. Number Theory \textbf{62} (1997), 163 -- 191.

\bibitem{serre}
J. P. Serre, \textit{Local fields}. Springer, New York, 1995.

\bibitem{stewart}
B. M. Stewart, \textit{Sums of distinct divisors}, Amer. J. Math. \textbf{76} no. 4 (1954), 779 -- 785.

\bibitem{tenenbaum0}
G. Tenenbaum, \textit{Lois de r\'{e}partition des diviseurs}, 5, J. London Math. Soc. (2) \textbf{20} (1979), 165 -- 176.

\bibitem{tenenbaum}
\bysame, \textit{Sur un probl\`eme de crible et ses applications,} Ann. Sci. \'Ecole Norm. Sup. (4) \textbf{19} (1986), 1 -- 30. 

\bibitem{thompson}
L. Thompson, \textit{Polynomials with divisors of every degree}, J. Number Theory \textbf{132} (2012), p. 1038 - 1053.

\bibitem{thompson2}
\bysame, \textit{On the divisors of $x^n-1$ in $F_p[x]$}, preliminary e-print available at \url{http://math.dartmouth.edu/~thompson/notes.html}. 

\bibitem{thompsonthesis}
\bysame, \textit{Products of distinct cyclotomic polynomials}, Ph.D. thesis, Dartmouth College, 2012.

\end{thebibliography}
\end{document}